\newcommand{\mres}{\mathbin{\vrule height 1.6ex depth 0pt width
0.13ex\vrule height 0.13ex depth 0pt width 1.3ex}}
 \newcommand{\abs}[1]{\left\vert#1\right\vert}
 \newcommand{\norm}[1]{\left\Vert#1\right\Vert}
 \newtheorem{theorem}{Theorem}[section]
 \newtheorem{lemma}[theorem]{Lemma}
  \newtheorem{proposition}[theorem]{Proposition}
  \theoremstyle{definition}
  \newtheorem{definition}[theorem]{Definition}
 \newtheorem{remark1}[theorem]{Observation}
  \theoremstyle{remark}
 \newtheorem{remark}[theorem]{Remark}
 \numberwithin{equation}{section}
\definecolor{maroon}{rgb}{0.5,0,0}
\newcommand {\R} {\mathbb{R}}
\newcommand {\M} {\mathbb{M}}
\def\be{\begin{equation}}
\def\ee{\end{equation}}
\def\fr#1#2{\frac{\partial #1}{\partial #2}}
\DeclareMathOperator{\divo}{div}
\begin{document}
\title{On time adaptive critical variable exponent vectorial diffusion flows and their applications in image processing}

\author{V. B. Surya Prasath\footnote{Department of Computer Science, University of Missouri-Columbia, MO 65211 USA. E-mail:prasaths@missouri.edu}
\and
D. Vorotnikov\footnote{Corresponding author. CMUC, Department of Mathematics, University of Coimbra, 3001-501 Coimbra, Portugal. E-mail: mitvorot@mat.uc.pt}}

\date{}

\maketitle
\begin{abstract}
Variable exponent spaces have found interesting applications in real world problems. Recently, there have been considerable interest in utilizing variational and evolution problems based on variable exponents for imaging applications. The main classes of partial differential equations (PDEs) related to the variable exponents involve the $p(\cdot)$-Laplacian. In imaging applications, the variable exponent can approach the critical value $1$, and this poses unique challenges in proving existence of solutions, which have not been mastered earlier.  In this work, we develop some additional functional framework to study the time-dependent parabolic flows with critical variable exponents. Specifically, we consider bounded vectorial partial variation (BVPV) space and its variable exponent counterpart. We prove the existence of weak solutions of critical vectorial $p(t,x)$-Laplacian flow in our variable exponent space.  For non-time-dependent variable exponent based critical vectorial $p(x)$-Laplacian flow we obtain a semigroup solution. The results are new even in the scalar case. This is a theory-oriented draft, and the full paper will provide detailed experimental results on color image restoration using various example for the variable exponents and compare them traditional PDE based image processing procedures. Our results will indicate the applicability of variable exponent Laplacian flows in image processing in general and image restoration in particular. 
\end{abstract}
\section{Conventions}

Fix a measurable variable exponent $p :(0,T)\times \Omega \to [1,p_+]$. We look for the solutions $u:(0,T)\times \Omega\to \R^N$ to the following Neumann problem \be \label{E:mainpl1} \left\{
\begin{array}{ll}
\displaystyle \partial_t u(t,x)=\divo [|\nabla u(t,x)|^{p(t,x)-2}\nabla u(t,x)]+f(t,x,u(t,x)),\\ \fr {u} {\nu} (t,x)=0,\ x\in \partial \Omega,
\\ u(0,x)=u_0(x). \end{array}
\right. \ee

 \label{conv}
$\Omega\subset\R^n$ is the bounded domain with Lipschitz boundary, $\R^N$ is the codomain of vector-valued functions $u(t,x)$. 

$Q=(0,T)\times\Omega$

$Q_{t}=(0,t)\times\Omega$

$\M=\R^{N\times n}$ 

For a vector function $u(t,x)\in \R^{N}$, we write $\nabla u(t,x)\in \M$ for its Jacobi matrix. For a matrix function $A=A_{ij}(t,x)\in \M$, 
the divergence operator acts as $(\divo A)_i=\sum\limits_{j=1}^n
\fr{A_{ij}}{x_j}.$

The Frobenius norm of a matrix $A\in \M$ is denoted by $|A|=\sqrt{Tr(A^\top A)}$.  

The scalar product of vectors from $\R^N$ and the componentwise scalar product in $\M$ is denoted by a dot. 

$\Omega$ is bounded, so when we refer to Radon measures, we always mean finite Radon measures. 

The Lebesgue measure of a set is denote by $|\cdot|$

$(\cdot,\cdot)$ and $\|\cdot\|$ are the scalar product and Euclidean norm in $L^2(\Omega,\R^N)$ or in  $L^2(\Omega,\M)$.

$p_+':=p_+/p_+-1$

We recall \cite{ABM14,Ev10} that the \emph{subdifferential} of a functional $\Phi:H\to \R\cup \{+\infty\}$ is the multivalued map $$\partial \Phi:\mathrm{dom}\, \partial \Phi\subset H \multimap H,$$ \be \label{sdd}\partial\Phi(u) = \left\{w\in H: \Phi(u+h) \geq \Phi(u)+(w,h)_H,\ \forall h\in H \right\}.\ee The set $\mathrm{dom}\, \partial \Phi$ consists of those $u\in H$ for which the right-hand side of \eqref{sdd} is non-empty. When $\Phi$ is differentiable, $\partial \Phi$ is single-valued and coincides with the usual gradient $\mathrm{grad}_H\, \Phi:H \to H$. 

\section{The functional framework}

\subsection{Preliminaries}
We give here some basic definitions of the bounded variation and variable exponent spaces, for further details we refer to \cite{Diening11,HHL07,Leoni}.  Let
$L^0(\Omega)$ be the set of measurable scalar functions on $\Omega \subset \R^n$. 
Fix a function $p\in L^0(\Omega)$ (the variable exponent). 
Denote
\[p_- = ess \inf_{\Omega} p(x), \quad \text{and} \quad p_+ = ess \sup_{\Omega} p(x).\] 
For our needs it suffices to assume that \[1 \leq p_- \leq p_+ <+ \infty.\] 

Consider the modular $\rho(u) = \int_{\Omega}\abs{u(x)}^{p(x)}\,dx$. Then \begin{equation*}
L^{p(\cdot)}(\Omega) := \{u\in L^0(\Omega)|\exists \lambda>0:\ \rho(u/\lambda) <+\infty\}.
\end{equation*}
The Luxemburg-Nakano norm on $L^{p(\cdot)}$ is
$\norm{u}_{L^{p(\cdot)}}:= \inf\{\lambda > 0: \rho(u/\lambda)\leq1\}$. We recall that \be\label{e:p-p} L^{p_+}(\Omega)\subset L^{p(\cdot)}(\Omega)\subset L^{p_-}(\Omega),\ee and the embeddings are continuous. 

The Orlicz-Sobolev space is  \[ W^{1,p(\cdot)}(\Omega) := \{
u\in L^{p(\cdot)}(\Omega) : \nabla u \in L^{p(\cdot)}(\Omega)^n\}\] with $\norm{u}_{W^{1,p(\cdot)}}:=
\|u\|_{L^{p(\cdot)}} +\||\nabla u|\|_{L^{p(\cdot)}}$. Then \be\label{e:wp-p} W^{1,p_+}(\Omega)\subset W^{1,p(\cdot)}(\Omega)\subset W^{1,p_-}(\Omega),\ee and the embeddings are continuous. 

The bounded variation space is defined as follows: a function $u\in BV(\Omega)$ if $u\in L^{1}(\Omega)$, and if its total variation
$$TV(u):=\sup\left\{\int_{\Omega} u\, \divo \phi : \phi\in C^1_0(\Omega)^n, \norm{\phi}_{L^{\infty}(\Omega)^n}\leq
1\right\}$$ is finite; the norm in $BV(\Omega)$ is given by
\[\norm{u}_{BV}:=\|u\|_{L^{1}}+ TV(u).\]
The distributional gradient $\nabla u$ is a vector-valued Radon measure, and hence the total variation $|\nabla u|$ (in the measure-theoretic sense) of this measure is a Radon measure. It is called the total variation measure and is sometimes also denoted by $TV(u)$. One can check that $TV(u)(\Omega)=TV(u).$

 In the case when $p_-=1$, the Orlicz-Sobolev space defined above is not reflexive, and it is plausible to use the space $BV^{p(\cdot)}(\Omega)$ defined below in various applications.  
 
 \begin{definition} A measurable function $p:\Omega\to [1,+\infty)$ is \emph{$Y$-semicontinuous} provided $p(x_0)=1$ for any sequence $x_k\to x_0$ in $\Omega$ such that $p(x_k)\to 1$.
 
 \end{definition}
 
 Assume in addition that $p(x)$ is a $Y$-semicontinuous function. Let $$Y:= \left\{x\in\Omega : p(x)
=1\right\}$$ be the critical set where the exponent takes the value $1$.  It is obviously closed in the relative topology of $\Omega$.

We now put 
$BV^{p(\cdot)}(\Omega):=BV(\Omega)\cap W^{1,p(\cdot)}(\Omega\setminus Y)$, and
consider the nonnegative number
\[TV^{p(\cdot)}(u):=TV(u)(Y) + \int_{\Omega\setminus Y} \abs{\nabla u}^{p(x)}\,dx.\]
Then the norm in $BV^{p(\cdot)}$ is \[\norm{u}_{BV^{p(\cdot)}(\Omega)}:= \norm{u}_{L^{p(\cdot)}} +
\inf \left\{\lambda >0 : TV^{p(\cdot)}(u/\lambda)\leq
1\right\}.\] 

All the spaces discussed above are Banach ones. 

\begin{remark} It is usually assumed that the exponent $p(\cdot)$ is at least lower semicontinuous \cite{HHLT13} in the definition of $BV^{p(\cdot)}(\Omega)$. However, our weaker assumption of $Y$-semicontinuity is enough. \end{remark}

\begin{remark} If $u\in BV(\Omega)^N$ is a vector function, then we can set \be TV(u):=\sup\left\{\int_{\Omega} u\cdot \divo \Phi : \Phi\in C^1_0(\Omega;\M), \norm{\Phi}_{L^{\infty}(\Omega;\M)}\leq
1\right\}.\ee Moreover (see Section \ref{s:sob} for more general considerations), the components of the distributional Jacobi matrix $\nabla u$ are signed Radon measures on $\Omega$, and the  total variation (Radon) measure is determined by the formula \be TV(u)(E)=\sup\left\{\sum_k |\nabla u(E_k)|\right\},\ee where the supremum is taken over all Borel partitions $\{E_k\}$ of a Borel set $E\subset \Omega$.\end{remark}

\subsection{Sobolev-BVPV spaces} \label{s:sob}

The standard variable exponent function spaces (which were recalled in the previous section) are suitable for stationary problems. We now develop some additional functional framework in order to study parabolic variable exponent flows.

 Let $\mathcal M$ be the the continuous dual of the space $C_0(Q)^N$  of continuous vector functions on $Q$ that vanish on the boundary of the cylinder $Q$. By the Riesz duality \cite[Theorem B114]{Leoni}, it can be also viewed as the Banach space of $N$-vectorial signed Radon measures on $Q$ equipped with the total variation norm \be\|v\|_{\mathcal M}=|v|(Q),\ee where the Radon measure $|v|$ is the total variation of the measure $v$. We recall that \be |v|(E)=\sup\left\{\sum_k |v(E_k)|\right\},\ee
 where the supremum is taken over all Borel partitions $\{E_k\}$ of a Borel set $E\subset Q$. For $v\in \mathcal M$, we define the \emph{vectorial partial variation} of $v$ as  
\be\label{E:tvf1}VPV(v)=\sup\limits_{\Phi\in C_0^1(Q;\M):\, |\Phi|\leq 1}\left\langle v,\divo \Phi\right\rangle_{\mathcal M\times C_0(Q)^N}.\ee Observe that if $v\in L^1(0,T;W^1_1(\Omega))^N$, then $VPV(v)=\int_0^T\int_\Omega |\nabla v(t,x)|\,dx\,dt$. 

Define the ``bounded vectorial partial variation space'' $BVPV$ as $$\left\{ v\in \mathcal M\Big| \|v\|_{BVPV}:=\|v\|_{\mathcal M}+VPV(v)<+\infty\right\}.$$

The following proposition is straightforward by the Riesz duality:
\begin{proposition} \label{L:P3}  For any $v\in BVPV$, the components of its distributional Jacobi matrix $\nabla v$ are signed Radon measures on $Q$.\end{proposition}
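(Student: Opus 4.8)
The plan is to reduce the statement to one scalar estimate for each component of $\nabla v$ and then apply the Riesz representation theorem, i.e.\ the scalar analogue of the duality $C_0(Q)^\ast=\mathcal M$ already invoked above. Unpacking the hypothesis, $v\in BVPV$ means $v=(v^1,\dots,v^N)$ is an $N$-vectorial finite signed Radon measure on $Q$ with $VPV(v)<+\infty$; fixing indices $i_0\in\{1,\dots,N\}$ and $j_0\in\{1,\dots,n\}$, the goal is to show that the distribution $\partial_{x_{j_0}}v^{i_0}$, acting on $\psi\in C_0^\infty(Q)$ by $\langle \partial_{x_{j_0}}v^{i_0},\psi\rangle=-\langle v^{i_0},\partial_{x_{j_0}}\psi\rangle$, is a signed Radon measure.

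The key step is a single-entry test-field construction: to any scalar $\psi\in C_0^1(Q)$ I would associate the matrix field $\Phi[\psi]\in C_0^1(Q;\M)$ whose $(i_0,j_0)$ entry is $\psi$ and whose other entries are zero. Two elementary observations make this work. First, in the Frobenius norm $|\Phi[\psi](t,x)|=|\psi(t,x)|$, so $\Phi[\psi]$ is admissible in the supremum defining $VPV$ in \eqref{E:tvf1} precisely when $\|\psi\|_{L^\infty(Q)}\leq 1$. Second, $\divo\Phi[\psi]$ is the vector field whose $i_0$-th component is $\partial_{x_{j_0}}\psi$ and whose other components vanish, whence $\langle v,\divo\Phi[\psi]\rangle_{\mathcal M\times C_0(Q)^N}=\langle v^{i_0},\partial_{x_{j_0}}\psi\rangle=-\langle \partial_{x_{j_0}}v^{i_0},\psi\rangle$. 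Testing \eqref{E:tvf1} with $\psi$ and with $-\psi$ then yields $|\langle \partial_{x_{j_0}}v^{i_0},\psi\rangle|\leq VPV(v)$ whenever $\|\psi\|_{L^\infty(Q)}\leq 1$, hence by homogeneity $|\langle \partial_{x_{j_0}}v^{i_0},\psi\rangle|\leq VPV(v)\,\|\psi\|_{L^\infty(Q)}$ for every $\psi\in C_0^1(Q)$.

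To finish, I would invoke the density of $C_0^1(Q)$ (indeed of $C_0^\infty(Q)$) in $C_0(Q)$ for the supremum norm: the bounded linear functional $\psi\mapsto-\langle \partial_{x_{j_0}}v^{i_0},\psi\rangle$ on $C_0^1(Q)$ extends uniquely to a bounded linear functional on $C_0(Q)$, which by Riesz duality (the scalar version of \cite[Theorem B114]{Leoni}) is represented by a finite signed Radon measure $\mu_{i_0j_0}$ on $Q$; restricting the resulting identity to $\psi\in C_0^\infty(Q)$ shows $\partial_{x_{j_0}}v^{i_0}=-\mu_{i_0j_0}$ as distributions, so this component is a signed Radon measure (with total variation at most $VPV(v)$). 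Since $i_0$ and $j_0$ were arbitrary, every component of $\nabla v$ is a signed Radon measure. I do not expect a genuine obstacle here — this is exactly why the proposition is labelled straightforward — the only two points needing a line of justification are the normalization $|\Phi[\psi]|=|\psi|$ and the density/extension argument that promotes the estimate on $C_0^1(Q)$ to an actual Riesz representation on $C_0(Q)$.
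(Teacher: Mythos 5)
Your proof is correct and is exactly the argument the paper has in mind: the paper offers no written proof beyond the phrase ``straightforward by the Riesz duality,'' and your single-entry test field $\Phi[\psi]$, the resulting bound $|\langle \partial_{x_{j_0}}v^{i_0},\psi\rangle|\leq VPV(v)\,\|\psi\|_{L^\infty(Q)}$, and the density/Riesz extension step are precisely the details being elided. No gaps; the two points you flag as needing justification are indeed the only ones, and you handle both correctly.
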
 For any $v\in BVPV$ and a Borel set $E\subset Q$, consider the nonnegative scalar \be VPV(v)(E)=\sup\left\{\sum_k |\nabla v(E_k)|\right\},\ee where the supremum is taken over all Borel partitions $\{E_k\}$ of $E$. Then \cite[Proposition B75 and Theorem B114]{Leoni}and Proposition \ref{L:P3} imply
\begin{proposition} For any $v\in BVPV$, $VPV(v)$ is a Radon measure on $Q$, and $VPV(v)(Q)=VPV(v)$.\end{proposition}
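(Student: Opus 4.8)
The plan is to identify $VPV(v)$, both as the number defined in \eqref{E:tvf1} and as the set function defined just above the statement, with the total variation of the $\M$-valued Radon measure $\nabla v$ furnished by Proposition~\ref{L:P3}, and then to read off the conclusion from the Riesz duality already cited in the excerpt.

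First I would record that, by Proposition~\ref{L:P3}, $\nabla v = ((\nabla v)_{ij})$ is a finite $\M$-valued signed Radon measure on $Q$. Its total variation measure $|\nabla v|$ is then a well-defined finite nonnegative Radon measure on $Q$ by \cite[Proposition B75]{Leoni}, and for every Borel set $E\subset Q$ one has $|\nabla v|(E)=\sup\{\sum_k |\nabla v(E_k)|\}$, the supremum running over all Borel partitions $\{E_k\}$ of $E$. This is verbatim the definition of $VPV(v)(E)$, so $VPV(v)=|\nabla v|$ as set functions, which already establishes the first assertion.

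It then remains to check that $|\nabla v|(Q)$ equals the number $VPV(v)$ of \eqref{E:tvf1}. By the Riesz representation theorem applied to the $\M$-valued measure $\nabla v$ \cite[Theorem B114]{Leoni}, $$|\nabla v|(Q)=\sup\left\{\langle \nabla v,\Phi\rangle : \Phi\in C_0(Q;\M),\ |\Phi|\le 1\right\},\qquad \langle \nabla v,\Phi\rangle:=\sum_{i,j}\int_Q \Phi_{ij}\,d(\nabla v)_{ij}.$$ For $\Phi\in C^1_0(Q;\M)$ the very definition of the distributional Jacobi matrix gives the integration-by-parts identity $\langle \nabla v,\Phi\rangle=-\langle v,\divo\Phi\rangle_{\mathcal M\times C_0(Q)^N}$; since the admissible test matrices form a set symmetric under $\Phi\mapsto-\Phi$, it follows that $VPV(v)=\sup\{\langle \nabla v,\Phi\rangle:\Phi\in C^1_0(Q;\M),\ |\Phi|\le1\}$. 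Because $C^1_0(Q;\M)\subset C_0(Q;\M)$, this immediately yields $VPV(v)\le |\nabla v|(Q)$.

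The reverse inequality — the only genuinely delicate point — is a density argument: given $\Phi\in C_0(Q;\M)$ with $|\Phi|\le 1$, I would produce $\Phi_k\in C^1_0(Q;\M)$ with $|\Phi_k|\le 1$ and $\Phi_k\to\Phi$ uniformly on $Q$, whence $\langle\nabla v,\Phi_k\rangle\to\langle\nabla v,\Phi\rangle$ since $\nabla v$ is a finite measure, and taking the supremum over such $\Phi$ gives $|\nabla v|(Q)\le VPV(v)$. The construction is the standard cut-off-and-mollify one: multiply $\Phi$ by a smooth scalar cut-off $\chi_k$ with $0\le\chi_k\le1$, equal to $1$ on $\{x\in Q:\mathrm{dist}(x,\partial Q)\ge 2/k\}$ and supported in $\{x\in Q:\mathrm{dist}(x,\partial Q)\ge 1/k\}$, and then convolve $\chi_k\Phi$ with a nonnegative mollifier of unit mass and radius $<1/k$; both operations are pointwise norm-nonincreasing, so $|\Phi_k|\le 1$ is preserved, and $\chi_k\Phi\to\Phi$ uniformly because $\Phi$ is uniformly continuous and vanishes at $\partial Q$. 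The main obstacle is precisely this simultaneous bookkeeping — approximating in the $C_0$ topology while keeping the pointwise bound $|\Phi_k|\le 1$ and the vanishing on $\partial Q$; once it is in place, the proposition follows by combining it with the Riesz-duality facts recalled above.
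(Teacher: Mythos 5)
Your argument is correct and is exactly the route the paper takes: the paper proves this proposition simply by citing \cite[Proposition B75 and Theorem B114]{Leoni} together with Proposition \ref{L:P3}, i.e.\ by identifying the set function $VPV(v)(\cdot)$ with the total variation measure $|\nabla v|$ and using the Riesz duality characterization of $|\nabla v|(Q)$ as a supremum over $C_0$ test fields. Your write-up just makes explicit the density/cut-off step (passing from $C_0(Q;\M)$ to $C_0^1(Q;\M)$ while preserving $|\Phi|\le 1$) that the paper leaves implicit in the citation.
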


For every open set $U\subset Q$ and $v\in BVPV$, applying \cite[Theorem B114]{Leoni} to the measure $\nabla v\mres U \in (C_0(U;\M))^*$, we infer that \be\label{E:tvf3}VPV(v)(U)=\sup\limits_{\Phi\in C_0^\infty(U;\M):\, |\Phi|\leq 1}\left\langle v,\divo \Phi\right\rangle_{\mathcal M(U)\times C_0(U)^N}.\ee

Owing to lower semicontinuity of suprema, we can derive from \eqref{E:tvf3} the following fact:
\begin{proposition} \label{L:twin} For any weakly-* converging (in $\mathcal{M}$) sequence $\{v_m\}\subset BVPV$, one has \be \label{E:twin1} VPV(v)\leq \liminf_{m\to +\infty}  VPV(v_m)\ee where $v=\lim v_m$. Moreover, \be \label{E:twinU} VPV(v)(U)\leq \liminf_{m\to +\infty}  VPV(v_m)(U)\ee for every open set $U\subset Q$. \end{proposition}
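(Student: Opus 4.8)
The plan is to deduce lower semicontinuity directly from the dual representation formulas \eqref{E:tvf1} and \eqref{E:tvf3}, exploiting that a supremum of a family of functionals that are each continuous along the relevant mode of convergence is automatically lower semicontinuous. First I would fix a test field $\Phi\in C_0^1(Q;\M)$ with $|\Phi|\leq 1$. Since $\divo\Phi\in C_0(Q)^N$ is a fixed admissible test function for the weak-* pairing, and $v_m\rightharpoonup^* v$ in $\mathcal M$ means precisely $\langle v_m,\psi\rangle\to\langle v,\psi\rangle$ for every $\psi\in C_0(Q)^N$, I get $\langle v,\divo\Phi\rangle=\lim_{m}\langle v_m,\divo\Phi\rangle\leq\liminf_m VPV(v_m)$, the inequality because $\langle v_m,\divo\Phi\rangle\leq VPV(v_m)$ for each $m$ by definition \eqref{E:tvf1}. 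Taking the supremum over all such $\Phi$ on the left yields \eqref{E:twin1}. (One should first note $v\in BVPV$, or at least that $VPV(v)$ is well-defined in $[0,+\infty]$: $v\in\mathcal M$ is immediate since weak-* limits stay in the dual space, and if the right-hand side is $+\infty$ there is nothing to prove, while if it is finite the same estimate shows $VPV(v)<+\infty$ so $v\in BVPV$.)

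For the localized statement \eqref{E:twinU}, the argument is essentially identical but uses \eqref{E:tvf3} instead: fix an open set $U\subset Q$ and a test field $\Phi\in C_0^\infty(U;\M)$ with $|\Phi|\leq 1$. Extending $\Phi$ by zero, $\divo\Phi\in C_0(Q)^N$ (it is supported in a compact subset of $U$, hence of $Q$), so again $\langle v_m,\divo\Phi\rangle\to\langle v,\divo\Phi\rangle$ and $\langle v_m,\divo\Phi\rangle\leq VPV(v_m)(U)$ by \eqref{E:tvf3} applied to $v_m$. Passing to the liminf and then taking the supremum over admissible $\Phi$ gives \eqref{E:twinU} via \eqref{E:tvf3} applied to $v$. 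The key point making this clean is that the class of admissible test fields defining $VPV(\cdot)(U)$ does not depend on $m$, so each member of the defining family is weak-* continuous.

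The only mild subtlety — which I would flag rather than belabor — is the compatibility of the zero-extension: a field in $C_0^\infty(U;\M)$ has support compactly contained in $U$, so its extension by zero to $Q$ is genuinely $C^1$ (indeed smooth) with compact support in $Q$, and its divergence lies in $C_0(Q)^N$; thus the pairing $\langle v_m,\divo\Phi\rangle$ makes sense for the global measures $v_m$ and coincides with the pairing against the restriction $\nabla v_m\mres U$. I expect no genuine obstacle here — this is the standard ``supremum of weak-* continuous linear functionals is weak-* lower semicontinuous'' principle — so the main thing to get right is simply the bookkeeping of which test-function class and which pairing (global $\mathcal M\times C_0(Q)^N$ versus local $\mathcal M(U)\times C_0(U)^N$) is being used at each step, and making sure $v\in BVPV$ is addressed before writing $VPV(v)$.
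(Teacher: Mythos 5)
Your argument is correct and is exactly the paper's intended proof: the paper derives Proposition \ref{L:twin} in one line from \eqref{E:tvf3} ``owing to lower semicontinuity of suprema,'' i.e.\ the same principle that a supremum of weak-* continuous linear functionals is weak-* lower semicontinuous, which you have simply written out with the bookkeeping made explicit. Nothing further is needed.
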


Then \eqref{E:twin1} yields \begin{proposition} \label{L:twba} $BVPV$ is a Banach space. \end{proposition}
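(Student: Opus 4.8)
The plan is to show completeness by a standard Cauchy-sequence argument, using the two pieces of the norm $\|\cdot\|_{BVPV} = \|\cdot\|_{\mathcal M} + VPV(\cdot)$ separately and then gluing the limits together via the lower semicontinuity already established in Proposition \ref{L:twin}. First I would take a Cauchy sequence $\{v_m\}\subset BVPV$. Since $\|v_m - v_k\|_{\mathcal M} \le \|v_m - v_k\|_{BVPV}$, the sequence is Cauchy in the Banach space $\mathcal M$ (the total variation norm makes $\mathcal M = (C_0(Q)^N)^*$ complete), so there is $v\in\mathcal M$ with $\|v_m - v\|_{\mathcal M}\to 0$. Norm convergence in $\mathcal M$ implies weak-* convergence $v_m \overset{*}{\rightharpoonup} v$, so Proposition \ref{L:twin} is applicable to every subsequence.

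Next I would show $v\in BVPV$, i.e. $VPV(v)<+\infty$. This is immediate: a Cauchy sequence is bounded, so $\sup_m VPV(v_m) \le \sup_m \|v_m\|_{BVPV} =: C < +\infty$, and \eqref{E:twin1} gives $VPV(v)\le \liminf_m VPV(v_m)\le C$. Hence $\|v\|_{BVPV} = \|v\|_{\mathcal M} + VPV(v) < +\infty$ and $v\in BVPV$.

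The one genuine point requiring care is that $v_m\to v$ in the \emph{full} $BVPV$ norm, not merely in $\mathcal M$; equivalently, $VPV(v_m - v)\to 0$. For this I would fix $\varepsilon>0$, pick $M$ so that $\|v_m - v_k\|_{BVPV}<\varepsilon$ for all $m,k\ge M$, hence in particular $VPV(v_m - v_k)<\varepsilon$. Now hold $m\ge M$ fixed and let $k\to\infty$: since $v_k\overset{*}{\rightharpoonup} v$ in $\mathcal M$, also $v_m - v_k \overset{*}{\rightharpoonup} v_m - v$, so applying \eqref{E:twin1} to the sequence $\{v_m - v_k\}_k$ yields
\be
VPV(v_m - v) \le \liminf_{k\to+\infty} VPV(v_m - v_k) \le \varepsilon .
\ee
Combined with $\|v_m - v\|_{\mathcal M}\le\varepsilon$ for $m\ge M$ (take $k\to\infty$ in $\|v_m-v_k\|_{\mathcal M}<\varepsilon$, using norm convergence in $\mathcal M$), this gives $\|v_m - v\|_{BVPV}\le 2\varepsilon$ for all $m\ge M$, proving $v_m\to v$ in $BVPV$. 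The only subtlety — and the step I would flag as the place to be careful — is the legitimacy of passing to the limit inside $VPV$ while the first argument $v_m$ is frozen: this is exactly what Proposition \ref{L:twin} licenses, since weak-* convergence of $v_k$ to $v$ transfers to weak-* convergence of the \emph{differences} by linearity, and the lower semicontinuity estimate does not require the frozen term to vary. Everything else (completeness of $\mathcal M$, boundedness of Cauchy sequences, that the two seminorm pieces add to a norm whose vanishing forces $v=0$) is routine. Since $BVPV$ is already a normed space by construction, completeness is all that remains, so the proof is finished.
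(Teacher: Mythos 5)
Your proof is correct and follows exactly the route the paper intends: the paper gives no details beyond the remark that the lower semicontinuity \eqref{E:twin1} yields Proposition \ref{L:twba}, and your argument (completeness of $\mathcal M$, boundedness of the Cauchy sequence to get $VPV(v)<+\infty$, and applying \eqref{E:twin1} to the weak-* convergent differences $v_m-v_k$ to get $VPV(v_m-v)\to 0$) is precisely the standard way to fill in that sketch.
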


We now fix a $Y$-semicontinuous scalar  function $p :Q \to [p_-,p_+]\subset [1,+\infty)$ (the variable exponent). Note that the critical set \be Y:=[p(t,x)=1]\ee is closed in the relative topology of $Q$. 

The evolutionary Orlicz-Sobolev space $W^{0,1}_{p(\cdot)}
(Q\backslash Y)^N$ is defined as \[ W^{0,1}_{p(\cdot)}
(Q\backslash Y)^N := \{
v\in L^{p(\cdot)}(Q\backslash Y)^N : \nabla v \in L^{p(\cdot)}(\Omega;\M)\}.\] It is easy to establish \begin{proposition} \label{p:p5b} The space $W^{0,1}_{p(\cdot)}
(Q\backslash Y)^N$ equipped with the norm $$\norm{v}_{W^{0,1}_{p(\cdot)}
(Q\backslash Y)^N}:=
\||v|\|_{L^{p(\cdot)}(Q\backslash Y)} 
+\||\nabla v|\|_{L^{p(\cdot)}(Q\backslash Y)}.$$ is a Banach space. It is reflexive provided $p_->1$. \end{proposition}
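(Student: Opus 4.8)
The plan is to verify the two standard axioms of a Banach space for $W^{0,1}_{p(\cdot)}(Q\backslash Y)^N$: that $\norm{\cdot}_{W^{0,1}_{p(\cdot)}(Q\backslash Y)^N}$ is a genuine norm, and that the space is complete; the reflexivity when $p_->1$ is then an easy corollary of the reflexivity of the variable exponent Lebesgue spaces in that regime. That the given expression is a norm is immediate from the fact that $\norm{\cdot}_{L^{p(\cdot)}}$ is a norm (Luxemburg–Nakano) and that $v\mapsto \nabla v$ is linear; the only nontrivial point is the implication $\norm{v}=0\Rightarrow v=0$, which follows because $\norm{|v|\|_{L^{p(\cdot)}(Q\backslash Y)}=0$ already forces $v=0$ a.e.\ on $Q\backslash Y$, and since $Y$ has measure zero in the cases of interest (or, more precisely, since the space is by definition a space of functions on $Q\backslash Y$), this means $v=0$ as an element of the space.

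For completeness, I would take a Cauchy sequence $\{v_m\}$ in $W^{0,1}_{p(\cdot)}(Q\backslash Y)^N$. Then $\{v_m\}$ is Cauchy in $L^{p(\cdot)}(Q\backslash Y)^N$ and $\{\nabla v_m\}$ is Cauchy in $L^{p(\cdot)}(\Omega;\M)$ — here one uses the completeness of the variable exponent Lebesgue spaces, which is classical (see \cite{Diening11,HHL07}). Hence there exist $v\in L^{p(\cdot)}(Q\backslash Y)^N$ and $V\in L^{p(\cdot)}(\Omega;\M)$ with $v_m\to v$ and $\nabla v_m\to V$ in the respective norms. The remaining point is to identify $V=\nabla v$ in the distributional sense. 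For this I would use the continuous embedding \eqref{e:p-p}, i.e.\ $L^{p(\cdot)}\hookrightarrow L^{p_-}\hookrightarrow L^1$ (on the bounded set $Q\backslash Y$), so that norm convergence in $L^{p(\cdot)}$ implies convergence in $L^1_{loc}$; then for any test matrix field $\Phi\in C_0^\infty(Q\backslash Y;\M)$ one passes to the limit in the identity $\int v_m\cdot\divo\Phi = -\int \nabla v_m\cdot\Phi$ to obtain $\int v\cdot\divo\Phi = -\int V\cdot\Phi$, whence $\nabla v=V\in L^{p(\cdot)}$ and $v\in W^{0,1}_{p(\cdot)}(Q\backslash Y)^N$ with $v_m\to v$ in its norm.

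For the reflexivity statement, when $p_->1$ the variable exponent spaces $L^{p(\cdot)}(Q\backslash Y)^N$ and $L^{p(\cdot)}(\Omega;\M)$ are reflexive (this is again classical, as $1<p_-\le p_+<+\infty$; see \cite{Diening11}). The map $v\mapsto (v,\nabla v)$ is then a linear isometry of $W^{0,1}_{p(\cdot)}(Q\backslash Y)^N$ onto a subspace of the reflexive product $L^{p(\cdot)}(Q\backslash Y)^N\times L^{p(\cdot)}(\Omega;\M)$, and this subspace is closed by the completeness just established. Since closed subspaces of reflexive spaces are reflexive, the conclusion follows.

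The only mild obstacle is the identification of the weak gradient of the limit, and there the point worth stating carefully is why $L^{p(\cdot)}$-convergence suffices to pass to the limit against smooth compactly supported test fields; this is exactly where the embedding $L^{p(\cdot)}(Q\backslash Y)\subset L^{p_-}(Q\backslash Y)$ from \eqref{e:p-p}, valid because $Q\backslash Y$ has finite measure, does the job. Everything else is a routine transcription of the proof that Sobolev spaces are Banach and of the corresponding argument for variable exponent Sobolev spaces on a domain, adapted to the present setting where the domain is $Q\backslash Y$ and the codomain is $\R^N$ (respectively $\M$ for the gradient).
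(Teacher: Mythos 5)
Your proof is correct and is exactly the standard argument the paper has in mind when it states the proposition with ``It is easy to establish'' and omits the details: completeness of $L^{p(\cdot)}$, identification of the distributional gradient of the limit by testing against $C_0^\infty(Q\backslash Y;\M)$ via the embedding $L^{p(\cdot)}\subset L^{p_-}\subset L^1$ from \eqref{e:p-p}, and reflexivity via the isometric embedding $v\mapsto(v,\nabla v)$ onto a closed subspace of a product of reflexive $L^{p(\cdot)}$ spaces. No gaps.
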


We define the subset $BVPV^{p(\cdot)}$ of $\mathcal{M}$ as follows: a vectorial Radon measure $v$ belongs to $BVPV^{p(\cdot)}$ provided $VPV(v)<\infty$, the restriction $v \mres (Q\backslash Y)$ is absolutely continuous with respect to the Lebesgue measure, and the Radon-Nikodym derivative \[v|_{Q\backslash Y}:=\frac{d (v \mres (Q\backslash Y))}{dx\otimes dt}\] belongs to $W^{0,1}_{p(\cdot)}
(Q\backslash Y)^N$. This definition is expressed by the following descriptive but sloppy formula: \be\label{bvpv} BVPV^{p(\cdot)}=BVPV \cap 
W^{0,1}_{p(\cdot)}
(Q\backslash Y)^N.\ee

Given a function $v\in BVPV^{p(\cdot)}$, we define the
 nonnegative number \be \label{vpvp}VPV^{p(\cdot)}(v)=VPV(v)(Y) +
  \int_{Q\backslash Y} |\nabla v(t,x)|^{p(t,x)}\,dx\,dt.\ee

\begin{proposition} $BVPV^{p(\cdot)}$ is a Banach space, being equipped with the norm
\be\label{norm1}\|v\|_{BVPV_{p(\cdot)}}:=|v|(Y)+\|v\|_{L^{p(\cdot)}(Q\backslash Y)^N} + 
\inf \left\{\lambda >0 : VPV^{p(\cdot)}(v/\lambda)\leq
1\right\} , \ee  or with the equivalent norm \be\label{norm2}\|v\|_{BVPV^{p(\cdot)}}:=\|v\|_{BVPV} + \|v\|_{W^{0,1}_{p(\cdot)}
(Q\backslash Y)^N}.\ee \end{proposition}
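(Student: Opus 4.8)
The plan is to show first that $\|\cdot\|_{BVPV^{p(\cdot)}}$ as given by \eqref{norm1} is genuinely a norm, then verify completeness, and finally establish that \eqref{norm2} is an equivalent norm. For the norm axioms, homogeneity and the triangle inequality for the third summand $\inf\{\lambda>0:VPV^{p(\cdot)}(v/\lambda)\leq 1\}$ follow from the fact that $VPV^{p(\cdot)}$ is built from a measure restriction (which is positively homogeneous of degree one and subadditive in the obvious pointwise-measure sense) plus a convex modular $\int_{Q\setminus Y}|\nabla v|^{p(t,x)}$; the Luxemburg-gauge construction of a modular that is convex and positively $1$-homogeneous on a subspace always yields a seminorm, and the added pieces $|v|(Y)$ and $\|v\|_{L^{p(\cdot)}(Q\setminus Y)^N}$ upgrade it to a norm since their sum vanishes only for $v=0$. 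I would spell this out briefly but not belabour it, since it is standard Orlicz-space bookkeeping once one notes that $v\mapsto VPV(v)(Y)$ is convex (it is a supremum of linear functionals of $v$, by \eqref{E:tvf3} applied with $U$ ranging over neighbourhoods of $Y$, or more directly by lower semicontinuity and the partition definition) and positively $1$-homogeneous.

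For completeness, take a Cauchy sequence $\{v_m\}$ in $BVPV^{p(\cdot)}$. From $|v_m|(Y)+\|v_m\|_{L^{p(\cdot)}(Q\setminus Y)^N}$ plus the gauge term one controls $\|v_m\|_{BVPV}$ and $\|v_m\|_{W^{0,1}_{p(\cdot)}(Q\setminus Y)^N}$ (this is essentially the comparison with \eqref{norm2}), so $\{v_m\}$ is Cauchy in the Banach space $BVPV$ (Proposition \ref{L:twba}) and its Radon--Nikodym derivatives $\{v_m|_{Q\setminus Y}\}$ are Cauchy in the Banach space $W^{0,1}_{p(\cdot)}(Q\setminus Y)^N$ (Proposition \ref{p:p5b}). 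Let $v$ be the $BVPV$-limit and $w$ the $W^{0,1}_{p(\cdot)}$-limit. The key point is to check that $v\in BVPV^{p(\cdot)}$, i.e. that $v\mres(Q\setminus Y)$ is Lebesgue-absolutely continuous with derivative equal to $w$: convergence in $BVPV$ gives weak-$*$ convergence in $\mathcal M$, hence $v_m\mres(Q\setminus Y)\to v\mres(Q\setminus Y)$ weakly-$*$, while convergence in $W^{0,1}_{p(\cdot)}$ forces $v_m|_{Q\setminus Y}\,dx\,dt\to w\,dx\,dt$ strongly in $L^1_{loc}(Q\setminus Y)$ and hence weakly-$*$ as measures; uniqueness of weak-$*$ limits identifies $v\mres(Q\setminus Y)=w\,dx\,dt$, so $v|_{Q\setminus Y}=w\in W^{0,1}_{p(\cdot)}(Q\setminus Y)^N$ and $VPV(v)<\infty$. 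Finally I would verify $v_m\to v$ in the $BVPV^{p(\cdot)}$-norm: the first two terms converge by the $BVPV$ and $L^{p(\cdot)}$ convergences, and the modular/gauge term converges because $VPV(v_m-v)(Y)\to 0$ (from $|v_m-v|(Q)\to 0$ together with the lower semicontinuity estimate \eqref{E:twinU}, which pins down the mass on the closed set $Y$) and $\int_{Q\setminus Y}|\nabla(v_m-v)|^{p(t,x)}\to 0$ (from convergence in $W^{0,1}_{p(\cdot)}$ and the standard modular-vs-norm equivalence on $L^{p(\cdot)}$, $p_+<\infty$).

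For the equivalence of \eqref{norm1} and \eqref{norm2}, I would observe that both are sums of: a term measuring the $Y$-mass ($|v|(Y)$, common to both; note $\|v\|_{BVPV}=\|v\|_{\mathcal M}+VPV(v)$ and $VPV(v)=VPV(v)(Y)+\int_{Q\setminus Y}|\nabla v|$), a term measuring $\|v|_{Q\setminus Y}\|_{L^{p(\cdot)}}$ together with its $\nabla$ in $L^{p(\cdot)}$, and lower-order $L^1$ contributions on $Q\setminus Y$. The gauge $\inf\{\lambda:VPV^{p(\cdot)}(v/\lambda)\leq 1\}$ is, by the definition \eqref{vpvp}, exactly the Luxemburg gauge of the modular $m(v)=VPV(v)(Y)+\int_{Q\setminus Y}|\nabla v|^{p(t,x)}$, which splits as a ``sum modular'': by the elementary two-sided bound relating the gauge of a sum modular to the gauge of each summand, this gauge is comparable to $VPV(v)(Y)+\||\nabla v|\|_{L^{p(\cdot)}(Q\setminus Y)}$. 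Collecting terms and using \eqref{e:p-p} to absorb the $L^1(Q\setminus Y)$ pieces (the measure on $Q\setminus Y$ is finite), one gets constants $c,C>0$ with $c\|v\|_{\eqref{norm2}}\leq\|v\|_{\eqref{norm1}}\leq C\|v\|_{\eqref{norm2}}$.

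The main obstacle I anticipate is the identification step in the completeness argument: reconciling the two different limit objects — the weak-$*$ $\mathcal M$-limit $v$ and the $W^{0,1}_{p(\cdot)}$-limit $w$ — and in particular making sure that no mass of $\nabla v_m$ ``escapes'' from $Q\setminus Y$ onto the closed critical set $Y$ in the limit, which is precisely what could break absolute continuity of $v\mres(Q\setminus Y)$. The resolution relies on the local character of \eqref{E:tvf3}: on any open set $U\Subset Q\setminus Y$ the variation is controlled by the $L^{p(\cdot)}$-norm of $\nabla w$ (via the $W^{0,1}_{p(\cdot)}$-convergence and lower semicontinuity), so $\nabla v\mres(Q\setminus Y)$ is absolutely continuous with the expected density, while the remaining mass is accounted for on $Y$ by \eqref{E:twinU}. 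Everything else is routine verification of Banach-space axioms and Orlicz-modular estimates.
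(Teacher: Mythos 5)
Your proposal is correct and follows essentially the same route as the paper: completeness is inherited from the Banach spaces $BVPV$ (Proposition \ref{L:twba}) and $W^{0,1}_{p(\cdot)}(Q\backslash Y)^N$ (Proposition \ref{p:p5b}), the gauge term is treated as the Luxemburg norm of a convex modular, and the equivalence of \eqref{norm1} and \eqref{norm2} is a term-by-term comparison using \eqref{e:p-p}. One small repair to the identification step: restriction to $Q\backslash Y$ does not in general commute with weak-$*$ limits in $\mathcal M$ (mass can migrate onto the closed set $Y$), so instead of arguing via weak-$*$ convergence you should use the total variation convergence $\|v_m-v\|_{\mathcal M}\to 0$ that you already have from Cauchyness in $BVPV$; then $v_m\mres(Q\backslash Y)\to v\mres(Q\backslash Y)$ in total variation, absolute continuity of the limit follows because the absolutely continuous measures form a closed subspace in that norm, and your anticipated obstacle about mass escaping onto $Y$ disappears --- this is precisely how the paper argues.
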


\begin{proof} It is clear that \eqref{norm2} is a norm on $BVPV^{p(\cdot)}$. Let $\{v_k\}$ be a Cauchy sequence in this norm. By Proposition \ref{L:twba}, it admits a limit $v$ in $BVPV$. Hence, $v_k \mres (Q\backslash Y)\to v \mres (Q\backslash Y)$ in total variation. By Proposition \ref{p:p5b}, $v_k|_{Q\backslash Y}$ has a limit in $W^{0,1}_{p(\cdot)}
(Q\backslash Y)^N$, which should coincide with $v|_{Q\backslash Y}$. Thus, $BVPV^{p(\cdot)}$ is a Banach space.

Observe now that $ VPV^{p(\cdot)}(v)<+\infty$  for $v\in BVPV^{p(\cdot)}.$
Hence, $$\inf \left\{\lambda >0 : VPV^{p(\cdot)}(v/\lambda)\right\}<+\infty$$ for all $v\in BVPV^{p(\cdot)}$. Furthermore, $\inf \left\{\lambda >0 : VPV^{p(\cdot)}(v/\lambda)\right\}$ is a seminorm on $BVPV^{p(\cdot)}$ by an argument similar to the proof of \cite[Theorem 2.1.7]{Diening11}. Consequently, \eqref{norm1} is a norm on $BVPV^{p(\cdot)}$.  

It remains to conclude that both norms are equivalent since, due to \eqref{e:p-p}, \begin{multline}\|v\|_{BVPV_{p(\cdot)}}\to 0\\ \iff |v_k|(Y)+VPV(v_k)(Y) +
  \int_{Q\backslash Y} |v_k(t,x)|^{p(t,x)} +|\nabla v_k(t,x)|^{p(t,x)}\,dx\,dt \to 0\\ \iff |v_k|(Q)+VPV(v_k)+
    \int_{Q\backslash Y} |v_k(t,x)|^{p(t,x)} +|\nabla v_k(t,x)|^{p(t,x)}\,dx\,dt\to 0\\ \iff \|v\|_{BVPV^{p(\cdot)}}\to 0.\end{multline}
    \end{proof}
    
\begin{remark}The space $BVPV^{p(\cdot)}$ is relevant for image processing applications since it allows for jumps on the critical set $Y$ even if this critical set is merely a null set in $Q$ (e.g., a hypersurface in $Q$). \end{remark}    
    
    For any function $v\in BVPV^{p(\cdot)}$, we define the measure $VPV^{p(\cdot)}(v)$ on $Q$ by letting \be VPV^{p(\cdot)}(v)=VPV(v)\mres Y+
 |\nabla v(t,x)|^{p(t,x)}\,dx\,dt\mres (Q \setminus Y).\ee The following observation is clear in view of \eqref{vpvp}: \begin{proposition} For any $v\in BVPV^{p(\cdot)}$, $VPV^{p(\cdot)}(v)$ is a Radon measure on $Q$, and $VPV(v)^{p(\cdot)}(Q)=VPV^{p(\cdot)}(v)$.\end{proposition}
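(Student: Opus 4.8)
The plan is to decompose the measure $VPV^{p(\cdot)}(v)$ into its two defining summands, verify separately that each is a finite Radon measure on $Q$, and then combine them additively. First I would invoke the inclusion \eqref{bvpv}, so that $v\in BVPV^{p(\cdot)}\subset BVPV$ and the preceding proposition applies: $VPV(v)$ is a finite Radon measure on $Q$ with total mass $VPV(v)$. Since $Y=[p(t,x)=1]$ is closed in $Q$, hence Borel, the restriction $VPV(v)\mres Y$ is a well-defined finite Borel measure on $Q$; and because $Q\subset\R^{n+1}$ is a separable, locally compact metric space, every finite Borel measure on it is automatically Radon, so $VPV(v)\mres Y$ is a finite Radon measure.

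Next I would handle the absolutely continuous part $|\nabla v(t,x)|^{p(t,x)}\,dx\,dt\mres(Q\setminus Y)$. Here one has to check that this is actually a \emph{finite} measure, i.e.\ that $|\nabla v|^{p(\cdot)}\in L^1(Q\setminus Y)$. On the open set $Q\setminus Y$ the distributional Jacobi matrix of $v$ coincides with the weak gradient of the density $v|_{Q\setminus Y}\in W^{0,1}_{p(\cdot)}(Q\setminus Y)^N$ (taking distributional derivatives commutes with restriction to open sets, and $v|_{Q\setminus Y}\in L^1$ since $Q\setminus Y$ has finite measure and by \eqref{e:p-p}), so $|\nabla v|^{p(\cdot)}$ is a well-defined measurable function there. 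Membership $\nabla v\in L^{p(\cdot)}(Q\setminus Y;\M)$ a priori only gives finiteness of the modular $\rho(\nabla v/\lambda)$ for some $\lambda>0$; but since $p_+<+\infty$ one has $\int_{Q\setminus Y}|\nabla v|^{p(t,x)}\,dx\,dt\le\max\{1,\lambda\}^{p_+}\rho(\nabla v/\lambda)<+\infty$, which is the standard equivalence between membership in a variable exponent Lebesgue space and finiteness of the modular when the exponent is bounded. Therefore $|\nabla v|^{p(t,x)}\,dx\,dt\mres(Q\setminus Y)$ is a finite Borel, hence Radon, measure on $Q$.

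Finally, the sum of two finite Radon measures is again a finite Radon measure, so $VPV^{p(\cdot)}(v)=VPV(v)\mres Y+|\nabla v(t,x)|^{p(t,x)}\,dx\,dt\mres(Q\setminus Y)$ is a finite Radon measure on $Q$. Evaluating on $Q$ and using that the two summands are concentrated on the disjoint Borel sets $Y$ and $Q\setminus Y$ gives $VPV^{p(\cdot)}(v)(Q)=VPV(v)(Y)+\int_{Q\setminus Y}|\nabla v(t,x)|^{p(t,x)}\,dx\,dt$, which is exactly the number $VPV^{p(\cdot)}(v)$ defined in \eqref{vpvp}. The only point that is not pure bookkeeping with restrictions and sums of finite Radon measures is the $L^1$-integrability of $|\nabla v|^{p(\cdot)}$ on $Q\setminus Y$; this is precisely the step where the boundedness assumption $p_+<+\infty$ enters, and it is also what was implicitly used in the preceding proof to assert $VPV^{p(\cdot)}(v)<+\infty$.
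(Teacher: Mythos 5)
Your argument is correct and matches the paper's intent: the paper simply declares the proposition ``clear in view of \eqref{vpvp},'' and what you have written is exactly the bookkeeping behind that claim --- restriction of the finite Radon measure $VPV(v)$ to the Borel set $Y$, finiteness of the modular $\int_{Q\setminus Y}|\nabla v|^{p(t,x)}\,dx\,dt$ thanks to $p_+<+\infty$, and additivity over the disjoint pieces $Y$ and $Q\setminus Y$. No discrepancy with the paper's (implicit) proof.
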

The following lower-semicontinuity property of $VPV^{p(\cdot)}$ is crucial in the applications. 
    \begin{theorem} \label{L:twinp} For any open set $U\subset Q$, and for any weakly-* converging (in $\mathcal{M}$) sequence $\{v_m\}\subset BVPV^{p(\cdot)}$, $v=\lim v_m$, one has \be \label{E:twinpp} VPV^{p(\cdot)}(v)(U)\leq \liminf_{m\to +\infty}  VPV^{p(\cdot)}(v_m)(U). \ee 
    In particular, if $U=Q$ and the limit inferior is finite, then $v\in BVPV^{p(\cdot)}.$
    \end{theorem}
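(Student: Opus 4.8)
The plan is to realize $v\mapsto VPV^{p(\cdot)}(v)(U)$ as a supremum of weakly-* continuous affine functionals on $\mathcal M$; then \eqref{E:twinpp} is, exactly as in Proposition \ref{L:twin}, automatic from lower semicontinuity of such a supremum, and the concluding assertion follows from a coercivity estimate. For $(t,x)\in Q\setminus Y$ let $j^*(t,x,\cdot)$ be the Fenchel conjugate of $\xi\mapsto|\xi|^{p(t,x)}$ on $\M$; explicitly $j^*(t,x,\eta)=c(p(t,x))\,|\eta|^{p'(t,x)}$ with $c(p)\in(0,1)$ depending only on $p$ (here $p'=p/(p-1)$), so in particular $j^*(t,x,\eta)\le1$ whenever $|\eta|\le1$, uniformly in $(t,x)$. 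For $v\in\mathcal M$ and open $U\subseteq Q$ put
\[\Lambda(v)(U):=\sup\Big\{\langle v,\divo\Phi\rangle_{\mathcal M\times C_0(Q)^N}-\int_{U\setminus Y}j^*(t,x,\Phi(t,x))\,dx\,dt:\ \Phi\in C^1_0(U;\M),\ |\Phi|\le1\text{ on }U\cap Y\Big\}.\]
Each admissible $\Phi$ gives a weakly-* continuous affine functional of $v$ (note $\divo\Phi\in C_0(Q)^N$ after zero extension, and the penalty term is $v$-independent), so $v\mapsto\Lambda(v)(U)$ is weakly-* lower semicontinuous.

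Next I would establish the representation identity $\Lambda(w)(U)=VPV^{p(\cdot)}(w)(U)$ for every $w\in BVPV^{p(\cdot)}$, which shows $\Lambda$ extends the functional of the theorem. The inequality ``$\le$'' is pointwise Young/Fenchel: writing $\langle w,\divo\Phi\rangle=-\int_Q\Phi\cdot d\nabla w$, one bounds the part on $U\cap Y$ by $|\nabla w|(U\cap Y)=VPV(w)(U\cap Y)$ using $|\Phi|\le1$, and on $U\setminus Y$, where $\nabla w=\nabla^aw\,dx\,dt$ with $\nabla^aw\in L^{p(\cdot)}$, one uses $-\Phi\cdot\nabla^aw-j^*(t,x,\Phi)\le|\nabla^aw|^{p(t,x)}$, and sums. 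For ``$\ge$'' one takes the near-equality field $\Phi^*=-p|\nabla^aw|^{p-2}\nabla^aw$ on $Q\setminus Y$ and $\Phi^*=-\,d\nabla w/d|\nabla w|$ on $Y$, truncates $|\Phi^*|$ at a large level, restricts it to $\{p\ge1+\delta\}\subset Q\setminus Y$ (where $p'$ is bounded), rescales it near $Y$ so that $|\Phi^*|\le1$ on a neighbourhood of $Y$, and mollifies; along the resulting admissible test fields the functional converges to $VPV^{p(\cdot)}(w)(U)$ as the truncation level $\to\infty$, $\delta\to0$ and the mollification scale $\to0$. This is the step that adapts the scalar arguments of \cite{HHLT13} and the theory of convex functionals of measures.

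I would then prove coercivity: if $v\in\mathcal M$ and $\Lambda(v)(Q)<\infty$, then $v\in BVPV^{p(\cdot)}$. Testing with arbitrary $\Phi\in C^1_0(Q;\M)$, $|\Phi|\le1$, and using $j^*(t,x,\Phi)\le1$ gives $\langle v,\divo\Phi\rangle\le\Lambda(v)(Q)+|Q|$, hence $VPV(v)\le\Lambda(v)(Q)+|Q|<\infty$, so $v\in BVPV$ and $\nabla v$ is a vectorial Radon measure by Proposition \ref{L:P3}. Testing with fields supported in $\{p\ge1+\delta\}$, aligned with a putative singular part of $\nabla v$ there and concentrated on a set of arbitrarily small Lebesgue measure (the penalty stays bounded since $p'\le1+1/\delta$ on that set), one rules out such a singular part for every $\delta>0$, so $\nabla v\mres(Q\setminus Y)$ is absolutely continuous; optimizing the test field over $\{p\ge1+\delta,\ |\nabla^av|\le M\}$ and letting $\delta\to0$, $M\to\infty$ bounds $\int_{Q\setminus Y}|\nabla^av|^{p(t,x)}\,dx\,dt$, and together with $v\in L^1(Q)^N$ and a Poincaré-type estimate on $Q\setminus Y$ this yields $v|_{Q\setminus Y}\in W^{0,1}_{p(\cdot)}(Q\setminus Y)^N$, i.e.\ $v\in BVPV^{p(\cdot)}$.

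With these ingredients the theorem follows: if $\liminf_m VPV^{p(\cdot)}(v_m)(U)=+\infty$ there is nothing to prove, and otherwise, using $\Lambda(v_m)(U)\le VPV^{p(\cdot)}(v_m)(U)$ (the ``$\le$'' direction) together with weak-* lower semicontinuity of $\Lambda(\cdot)(U)$,
\[VPV^{p(\cdot)}(v)(U)=\Lambda(v)(U)\le\liminf_{m\to+\infty}\Lambda(v_m)(U)\le\liminf_{m\to+\infty}VPV^{p(\cdot)}(v_m)(U),\]
where the first equality is the representation identity, applicable once $v\in BVPV^{p(\cdot)}$ — which for $U=Q$ and finite limit inferior is exactly what coercivity provides, giving the last assertion. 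The step I expect to be the real work is the representation/coercivity analysis near the critical set $Y$: since $p'(t,x)\to\infty$ as $p(t,x)\to1$, the penalty $j^*(t,x,\Phi)$ is hypersensitive to $|\Phi|$ exceeding $1$ near $Y$, so every test-field construction must be localized to the sets $\{p\ge1+\delta\}$ and the limit $\delta\to0$ controlled by hand; it is precisely the $Y$-semicontinuity of $p$ that makes the mollification step compatible with the constraint $|\Phi|\le1$ on $Y$.
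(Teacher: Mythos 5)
Your strategy is genuinely different from the paper's. You set up the convex-duality machinery: realize $VPV^{p(\cdot)}(\cdot)(U)$ as a supremum of weak-* continuous affine functionals penalized by the Fenchel conjugate $j^*(t,x,\cdot)$ of $\xi\mapsto|\xi|^{p(t,x)}$, so that lower semicontinuity is automatic, and then carry the whole burden in a representation identity and a coercivity lemma. The paper avoids duality entirely: it exhausts $Q\setminus Y$ by the open sets $Q\setminus Y_\epsilon$, where $Y_\epsilon$ is the closed $\epsilon$-neighbourhood of $Y\cup\partial Q$; $Y$-semicontinuity plus compactness give $\inf_{Q\setminus Y_\epsilon}p>1$, so $L^{p(\cdot)}(Q\setminus Y_\epsilon)$ is reflexive and the gradient integral is weakly lower semicontinuous there by the modular estimate of \cite[Theorem 2.2.8]{Diening11}; the contribution of $Y$ is handled by Proposition \ref{L:twin} applied to $U_\epsilon=\mathring{Y}_\epsilon$, at the price of the error terms $|U_\epsilon\setminus Y|$ and $\int_{U_\epsilon\setminus Y}|\nabla v|^{p}$, which vanish as $\epsilon\to 0$ because $\bigcap_{\epsilon>0}(Y_\epsilon\setminus Y)=\varnothing$; membership $v\in BVPV^{p(\cdot)}$ falls out of the same $\epsilon$-uniform bounds, with no separate coercivity step. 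Your route, if completed, would yield a reusable dual formula for $VPV^{p(\cdot)}$; the paper's is considerably shorter.

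That said, the two steps you defer are exactly where your sketch currently fails. First, you repeatedly use test fields ``supported in'' or ``restricted to'' $\{p\geq 1+\delta\}$. Since $p$ is only measurable ($Y$-semicontinuous, not continuous), this level set need not contain any open set adapted to your construction, and a continuous field exceeding modulus $1$ does so on an open set that may meet $\{1<p<1+\delta\}$ in positive measure, where the penalty is uncontrolled: $c(p)M^{p'}\to+\infty$ as $p\to 1^{+}$ for any fixed $M>1$. The repair is precisely the paper's device: replace $\{p\geq 1+\delta\}$ by the open sets $Q\setminus Y_\epsilon$, on which $p$ is bounded away from $1$ by $Y$-semicontinuity and which exhaust $Q\setminus Y$. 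Second, in the ``$\geq$'' direction of the representation identity, mollifying the Borel polar field $d\nabla w/d|\nabla w|$ and pairing with $\nabla w$ does not in general recover $|\nabla w|(Y)$; you should instead invoke the supremum characterization \eqref{E:tvf3} on a shrinking open neighbourhood of $Y$ and control the spill-over onto $U_\epsilon\setminus Y$ by the same vanishing error terms. Finally, the identity $VPV^{p(\cdot)}(v)(U)=\Lambda(v)(U)$ is only meaningful once $v\in BVPV^{p(\cdot)}$, so for $U\neq Q$ you need a localized coercivity statement. None of this is unfixable, but as written the load-bearing steps are not yet proofs, and the fix leads you back to the construction the paper uses directly.
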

\begin{proof} Assume that $U=Q$. 
Mutatis mutandis, the same argument works for $U \neq Q$.

Let the limit inferior in \eqref{E:twinpp} be finite (otherwise the theorem holds trivially). By Proposition \ref{L:twin}, $v\in BVPV$. 

For any $\epsilon>0$, consider the sets  \be Y_\epsilon:=\left\{s_0\in Q, \inf_{s\in \partial Q\cup Y}|s_0-s|\leq \epsilon\right\},\ee which are closed in the relative topology of $Q$. Let $U_\epsilon:=\mathring{Y}_\epsilon$.

Then \be\label{e:au1} |\partial Y_\epsilon|= |\partial U_\epsilon|=0\ee (at least up to a countable set of $\epsilon$'s), and \be\label{e:au2} \inf_{(t,x)\not\in Y_\epsilon} p(t,x)>p_-.\ee Moreover, $\cap_{\epsilon>0}(Y_\epsilon \backslash Y)=\varnothing$, whence \be\label{e:au3} |Y_\epsilon \backslash Y|\to 0,\ee
\be\label{e:au4} \int_{U_\epsilon\backslash Y} |\nabla v(t,x)|^{p(t,x)}\,dx\,dt  \to 0\ee as $\epsilon \to 0$.

The spaces $W^{0,1}_{p(\cdot)}
(Q\backslash Y_\epsilon)^N$ are reflexive by Proposition \ref{p:p5b}. Consequently, passing to a subsequence if necessary, \be v_m|_{Q\backslash Y_\epsilon}\to v|_{Q\backslash Y_\epsilon}\ \mathrm{weakly}\ \mathrm{in}\ W^{0,1}_{p(\cdot)}
(Q\backslash Y_\epsilon)^N.\ee Hence, by \cite[Theorem 2.2.8]{Diening11},  \begin{multline}\label{e:modpp} \int_{Q\backslash Y_\epsilon} |\nabla v(t,x)|^{p(t,x)}\,dx\,dt\leq \liminf_{m\to +\infty}  \int_{Q\backslash Y_\epsilon} |\nabla v_m(t,x)|^{p(t,x)}\,dx\,dt\\ \leq \liminf_{m\to +\infty}  VPV^{p(\cdot)}(v_m). \end{multline}
Letting $\epsilon\to 0$, and remembering \eqref{e:au3}, by the Lebesgue monotone convergence theorem we infer 
\be\int_{Q\backslash Y} |\nabla v(t,x)|^{p(t,x)}\,dx\,dt\leq \liminf_{m\to +\infty}  \int_{Q\backslash Y} |\nabla v_m(t,x)|^{p(t,x)}\,dx\,dt.\ee Hence, $v\in BVPV^{p(\cdot)}.$ 

By Proposition \ref{L:twin}, \begin{multline} \label{E:twin4} VPV(v)(U_\epsilon)\leq \liminf_{m\to +\infty}  VPV(v_m)(U_\epsilon)\\=\liminf_{m\to +\infty}  VPV(v_m)(Y)+\liminf_{m\to +\infty}  \int_{U_\epsilon\backslash Y} |\nabla v_m(t,x)|\,dx\,dt\\ \leq \liminf_{m\to +\infty}  VPV(v_m)(Y)+\liminf_{m\to +\infty}  \int_{U_\epsilon\backslash Y} 
\left(1+|\nabla v_m(t,x)|^{p(t,x)}\right)\,dx\,dt.
\end{multline}

Employing \eqref{e:au1}, \eqref{e:au2}, \eqref{e:modpp} and \eqref{E:twin4}, we conclude that
\begin{multline}\label{E:twinpp1} VPV^{p(\cdot)}(v)= VPV(v)(Y)+\int_{Q\backslash Y} |\nabla v(t,x)|^{p(t,x)}\,dx\,dt\\\leq  VPV(v)(U_\epsilon) +\int_{Q\backslash Y_\epsilon} |\nabla v(t,x)|^{p(t,x)}\,dx\,dt+\int_{U_\epsilon\backslash Y} |\nabla v(t,x)|^{p(t,x)}\,dx\,dt\\ \leq \liminf_{m\to +\infty} VPV(v_m)(Y)+|U_\epsilon\backslash Y|+\liminf_{m\to +\infty} \int_{U_\epsilon\backslash Y} 
|\nabla v_m(t,x)|^{p(t,x)}\,dx\,dt \\ + \liminf_{m\to +\infty}  \int_{Q\backslash Y_\epsilon} |\nabla v_m(t,x)|^{p(t,x)}\,dx\,dt +\int_{U_\epsilon\backslash Y} |\nabla v(t,x)|^{p(t,x)}\,dx\,dt
\\ = |U_\epsilon\backslash Y|+\int_{U_\epsilon\backslash Y} |\nabla v(t,x)|^{p(t,x)}\,dx\,dt+\liminf_{m\to +\infty}  VPV^{p(\cdot)}(v_m), \end{multline}
and \eqref{E:twinpp} follows due to \eqref{e:au3} and \eqref{e:au4}.
  \end{proof}

\section{The critical vectorial $p(t,x)$-Laplacian flow}

Fix a $Y$-semicontinuous variable exponent $p :Q \to [1,p_+]$ with $Y=[p(t,x)=1]$ being the critical set in $Q$. We are interested in defining and studying the solutions $u:Q\to \R^N$, $Z\in Q\to \M$ to the following Neumann problem \be \label{E:mainpl} \left\{
\begin{array}{ll}
\displaystyle \partial_t u(t,x)=\divo Z(t,x)+f(t,x,u(t,x)),\\
Z(t,x)=|\nabla u(t,x)|^{p(t,x)-2}\nabla u(t,x),\\ Z(t,x)\nu(x) =0,\ x\in \partial \Omega,
\\ u(0,x)=u_0(x). \end{array}
\right. \ee 
The fidelity term is a measurable function $f:Q\times \R^N\to \R^N$, which is sublinear in the sense
\be\label{fid} |f(t,x,u)|\leq \tilde f(t,x)+C|u|\ee for some measurable function $\tilde f:Q\to \R$. 

\subsection{Weak solutions}

\begin{definition} \label{D:wwdefpx}  
Let $u_0\in L^2(\Omega)^N$, $\tilde f\in L^2(Q)$. A pair of functions $(u,Z)$ from the class \begin{multline} \label{E:uclass1} u\in C_w(0,T; L^2(\Omega)^N)\cap BVPV^{p(\cdot)}\cap W^1_{p_+'}(0,T; [L^2 \cap W^{1,p_+}(\Omega)^N]^*), \\ Z\in L^{p_+'}(Q;\M)\cap L^{\infty}(Y;\M),\ \|Z\|_{ L^\infty(Y;\M)}\leq 1,\end{multline} is called a weak solution
to problem \eqref{E:mainpl}  if \\ i) for all $v\in L^2\cap W^{1,p_+}(\Omega)^N$,
 \be\label{E:zp} \left\langle\frac {du} {dt},v \right\rangle+ \big(Z, \nabla v\big)=(f(\cdot,u),v) \ee a.e. on $(0,T)$;
 \\ ii)
for all $w\in L^{p_+}(0,T;W^{1,p_+}(\Omega))^N\cap W^{1,1}(0,T;L^2(\Omega))^N$ and any open set $U$, $Y\subset U\subset Q$, 
one has  \begin{multline} \label{E:wvep} 
\|u(T)-w(T)\|^2 +2\int_Q \partial_t w(t,x) \cdot u(t,x)\,dx\,dt+2\, VPV^{p(\cdot)}(u)(U)\\ \leq \|u_0-w(0)\|^2 +\|
w(T)\|^2-\|w(0)\|^2+2
\int_U Z(t,x)\cdot\nabla w(t,x) \,dx\,dt\\
+2\int_{Q\backslash U}|\nabla w(t,x)|^{p(t,x)-2}\nabla w(t,x) \cdot (\nabla w(t,x)-\nabla u(t,x))\,dx\,dt\\ +2\int_Q f(t,x,u(t,x))\cdot(u(t,x)-w(t,x))\,dx\,dt\;
\end{multline}
iii) the initial condition \be \label{E:ic} u(0)=u_0\ee  holds in the space $L^2(\Omega)^N$.  
\end{definition}
\begin{remark1} \label{R:1var}
Let us present a motivation for this definition of solution to \eqref{E:mainpl}. Further motivation is discussed in Observation \ref{r:ssol} below.

Consider a pair $(u,Z)$ of sufficiently regular functions satisfying \eqref{E:uclass1},\eqref{E:zp}, \eqref{E:wvep}, \eqref{E:ic}. Then we can rewrite \eqref{E:wvep} in the form \begin{multline} \label{E:wvepr0} 
2\int_Q \partial_t u(t,x) \cdot (u(t,x)-w(t,x))\,dx\,dt+2\int_U |\nabla u(t,x)|^{p(t,x)}\,dx\,dt\\ \leq 2
\int_U Z(t,x)\cdot\nabla w(t,x) \,dx\,dt\\
+2\int_{Q\backslash U}|\nabla w(t,x)|^{p(t,x)-2}\nabla w(t,x) \cdot (\nabla w(t,x)-\nabla u(t,x))\,dx\,dt\\ +2\int_Q f(t,x,u(t,x))\cdot(u(t,x)-w(t,x))\,dx\,dt.
\end{multline} Due to \eqref{E:zp} it reduces to \begin{multline} \label{E:wvepr} 
\int_U |\nabla u(t,x)|^{p(t,x)} \,dx\,dt\leq \int_Q Z(t,x)\cdot(\nabla u(t,x)-\nabla w(t,x)) \,dx\,dt\\+
\int_U Z(t,x)\cdot\nabla w(t,x) \,dx\,dt\\
+\int_{Q\backslash U}|\nabla w(t,x)|^{p(t,x)-2}\nabla w(t,x) \cdot (\nabla w(t,x)-\nabla u(t,x))\,dx\,dt.
\end{multline}  Testing \eqref{E:wvepr} by $w\equiv u$, we get 
 \begin{multline}\int_Y |\nabla u(t,x)| \,dx\,dt\leq  \int_U |\nabla u(t,x)|^{p(t,x)} \,dx\,dt \\ \leq
\int_U Z(t,x)\cdot\nabla u(t,x) \,dx\,dt, \end{multline} Letting $U=U_\epsilon$ where $U_\epsilon$ was defined during the proof of Theorem \ref{L:twinp}, and allowing $\epsilon \to 0$, we derive 
 \be \int_Y |\nabla u(t,x)| \,dx\,dt\leq 
\int_Y Z(t,x)\cdot\nabla u(t,x) \,dx\,dt, \ee
On the other hand, \be   |\nabla u|\geq Z\cdot\nabla u \ee a.e. in $Y$, whence\be \label{E:wve4p}  |\nabla u|=Z\cdot\nabla u, \ee and  \be  \label{E:wve5p}  Z=\frac {\nabla u}{|\nabla u|}=|\nabla u|^{p(\cdot)-2}\nabla u\ee a.e. in $Y$. Rigorously speaking, if $|Y\cap [\nabla u=0]|>0$, then \eqref{E:wve5p} merely holds in the following subdifferential sense  \be  \label{E:wve5psd}  Z\in \partial \psi(\nabla u)\ee a.e. in $Y$, where $ \partial \psi$ is the subdifferential (cf. Section \ref{conv}) of the function $\psi: \M\to \R,$ $\psi(A)= |A|$.

Substituting expression \eqref{E:wve4p} to the left-hand side of  \eqref{E:wvepr}, we derive that \begin{multline}
\int_{Q\backslash U}(Z-|\nabla w(t,x)|^{p(t,x)-2}\nabla w(t,x)) \cdot (\nabla u(t,x)-\nabla w(t,x))\,dx\,dt\\ \geq \int_{U\setminus Y} |\nabla u(t,x)|^{p(t,x)} \,dx\,dt - \int_{U\setminus Y} Z(t,x)\cdot\nabla u(t,x) \,dx\,dt.
\end{multline} Letting $U=U_\epsilon$, $\epsilon \to 0$, we get  \be \label{E:mbineq} 
\int_{Q\backslash Y}(Z-|\nabla w(t,x)|^{p(t,x)-2}\nabla w(t,x)) \cdot (\nabla u(t,x)-\nabla w(t,x))\,dx\,dt\geq 0. 
\ee By a Minty argument, this yields \be  \label{E:wve6p}  Z=|\nabla u|^{p(\cdot)-2}\nabla u\ee a.e. in $Q\backslash Y$. 

Integrating be parts in \eqref{E:zp}, we infer \be \label{e:fin1p} (\partial_t u - \divo Z-f(\cdot,u), v)+ \int_{\partial \Omega} v\cdot Z \nu\, d \mathcal{H}^{n-1}=0.\ee
Testing \eqref{e:fin1p} by any smooth $v$  compactly supported in $\Omega$, we deduce 
the first equation in \eqref{E:mainpl}. Consequently, both integrals in   \eqref{e:fin1p} are identically zero. By arbitrariness of $v$,\ $Z\nu=0$ \ $\mathcal{H}^{n-1}$-a.e. in $\partial \Omega$. 
\end{remark1}

\begin{theorem} \label{mthm} If $u_0\in L^2(\Omega)^N$, $\tilde f\in L^2(Q)$, and the function $v\mapsto v\cdot f(t,x,v)$ is concave for a.a. $(t,x)\in Q$. Then there exists a weak solution to \eqref{E:mainpl}. 
\end{theorem}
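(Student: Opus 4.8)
\section*{Proof proposal}

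The plan is to prove existence by a vanishing-regularization (penalization) scheme: replace the critical exponent $p$ by a strictly supercritical one and pass to the limit. First I would fix a sequence $p_k:Q\to[1+\tfrac1k,p_+]$ of $Y$-semicontinuous exponents with $p_k\searrow p$ pointwise and $p_k\equiv p$ on $[p\geq 1+\tfrac1k]$; on the critical set $Y$ one has $p_k=1+\tfrac1k$. For each $k$ the exponent is bounded below by $1+\tfrac1k>1$, so the functional $\Phi_k(u)=\int_Q\frac{1}{p_k(t,x)}|\nabla u|^{p_k(t,x)}\,dx\,dt$ is convex, proper, lower semicontinuous and coercive on $L^2(\Omega)^N$ (with the Neumann realization), and the flow $\partial_t u_k\in -\partial\Phi_k(u_k)+f(\cdot,u_k)$ has a solution by the standard Brezis--Komura theory for nonlinear semigroups of subgradient type together with a perturbation argument for the sublinear, ``concave-on-the-diagonal'' fidelity term $f$ (this is where the concavity of $v\mapsto v\cdot f(t,x,v)$ is used to keep the perturbed functional $u\mapsto\Phi_k(u)-\tfrac12\int v\cdot f$ essentially convex, or to run a fixed-point/monotonicity argument); the sublinear bound \eqref{fid} gives the a priori $L^2$ control. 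Denote the flux by $Z_k=|\nabla u_k|^{p_k-2}\nabla u_k$.

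The second step is the uniform a priori estimates. Testing the equation with $u_k-w$ and using \eqref{fid}, Gronwall, and Young's inequality gives uniform bounds for $u_k$ in $L^\infty(0,T;L^2(\Omega)^N)$ and for $\Phi_k(u_k)$, hence for the modular $\int_Q|\nabla u_k|^{p_k}\,dx\,dt$; in particular $\int_Y|\nabla u_k|^{1+1/k}\,dx\,dt$ and, by Young again, $\int_Y|\nabla u_k|\,dx\,dt$ and $VPV(u_k)$ are uniformly bounded, so $u_k$ is bounded in $BVPV$ and (on compactly-contained supercritical regions) in $W^{0,1}_{p(\cdot)}$. For the flux, $|Z_k|=|\nabla u_k|^{p_k-1}$, so $|Z_k|\leq 1$ on $Y$ uniformly and, by the inequality $|Z_k|^{p_k'}\leq|\nabla u_k|^{p_k}$ together with $p_k'\geq p_+'$ on the set where $p_k\leq p_+$, one gets a uniform bound for $Z_k$ in $L^{p_+'}(Q;\M)$; hence $\|Z_k\|_{L^\infty(Y;\M)}\leq 1$. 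Finally $\partial_t u_k=\divo Z_k+f(\cdot,u_k)$ is bounded in $W^{1}_{p_+'}(0,T;[L^2\cap W^{1,p_+}(\Omega)^N]^*)$. Extract a subsequence: $u_k\overset{*}{\rightharpoonup}u$ in $\mathcal M$ and weakly-$*$ in $L^\infty(0,T;L^2)$, $u_k\to u$ in $C_w(0,T;L^2)$ (Aubin--Lions--Simon with the time-derivative bound), $Z_k\rightharpoonup Z$ in $L^{p_+'}(Q;\M)$ and weakly-$*$ in $L^\infty(Y;\M)$ with $\|Z\|_{L^\infty(Y)}\leq 1$. The time-regularity and initial condition pass to the limit directly, giving \eqref{E:zp} and \eqref{E:ic}, and Theorem \ref{L:twinp} gives $u\in BVPV^{p(\cdot)}$ with the lower semicontinuity of $VPV^{p(\cdot)}(\cdot)(U)$ that will be needed for \eqref{E:wvep}.

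The third step is to derive \eqref{E:wvep}. For each $k$, the subgradient flow satisfies the energy/variational inequality: for admissible $w$ and for $U\supset Y$ open,
\begin{multline*}
\|u_k(T)-w(T)\|^2+2\int_Q\partial_t w\cdot u_k\,dx\,dt+2\Big(VPV^{p_k}(u_k)(U)\text{ with }p_k\Big)\\
\leq\|u_{0}-w(0)\|^2+\|w(T)\|^2-\|w(0)\|^2+2\int_U Z_k\cdot\nabla w\,dx\,dt\\
+2\int_{Q\setminus U}|\nabla w|^{p_k-2}\nabla w\cdot(\nabla w-\nabla u_k)\,dx\,dt+2\int_Q f(\cdot,u_k)\cdot(u_k-w)\,dx\,dt,
\end{multline*}
which one obtains by combining the pointwise-in-time energy equality $\frac12\frac{d}{dt}\|u_k-w\|^2+\langle\partial\Phi_k(u_k),u_k-w\rangle=(f(\cdot,u_k)-\partial_t w,u_k-w)$ with the subgradient inequality $\langle\partial\Phi_k(u_k),u_k-w\rangle\geq\Phi_k(u_k)-\Phi_k(w)$ localized to $U$ and a Young-type splitting on $Q\setminus U$. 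Now pass $k\to\infty$: the left side is handled by weak lower semicontinuity of the $L^2$-norm at time $T$ (using $u_k(T)\rightharpoonup u(T)$), by strong/weak convergence of $u_k$ against $\partial_t w\in L^1(0,T;L^2)$, and crucially by Theorem \ref{L:twinp}, since $VPV^{p_k}(u_k)(U)\geq VPV^{p(\cdot)}(u_k)(U)-|U\setminus Y|\cdot o(1)$ (because $|\nabla u_k|^{p_k}\geq|\nabla u_k|^{p}-\mathbf 1_{[|\nabla u_k|\leq 1]}$ and $p_k\to p$) so $\liminf_k VPV^{p_k}(u_k)(U)\geq VPV^{p(\cdot)}(u)(U)$; the right side converges using $Z_k\rightharpoonup Z$ against $\nabla w\in L^{p_+}(Q;\M)$, the convergence $|\nabla w|^{p_k-2}\nabla w\to|\nabla w|^{p(\cdot)-2}\nabla w$ strongly in $L^{p_+'}$ by dominated convergence, $\nabla u_k\rightharpoonup\nabla u$ in the relevant weak sense on $Q\setminus U$, and the concavity of $v\mapsto v\cdot f(t,x,v)$ together with $u_k\to u$ (weakly in $L^2(Q)$, a.e. after a further subsequence via compactness) to pass to the limit with an upper bound in $\int_Q f(\cdot,u_k)\cdot(u_k-w)$ by Fatou/weak-upper-semicontinuity. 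Collecting these gives \eqref{E:wvep}.

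I expect the main obstacle to be the passage to the limit in the two ``bad'' terms simultaneously: the $VPV^{p_k}(u_k)(U)$ term on the left, which must be bounded below in the limit by $VPV^{p(\cdot)}(u)(U)$ even though the critical set $Y$ carries only measure convergence of $p_k$ and only weak-$*$ measure convergence of $\nabla u_k$ — this is exactly why the paper isolated Theorem \ref{L:twinp} and why $U$ is required to be an open neighborhood of $Y$ (so that near $Y$ the cheap $VPV$ lower bound applies and away from $Y$ one has genuine $L^{p(\cdot)}$ reflexivity and Brezis--Browder-type weak lower semicontinuity of the modular) — and the concave fidelity term, whose limit one can only control from one side; the reason the hypothesis is concavity of $v\mapsto v\cdot f(t,x,v)$ rather than anything monotone is precisely to make $\limsup_k\int_Q f(\cdot,u_k)\cdot(u_k-w)\leq\int_Q f(\cdot,u)\cdot(u-w)$ hold under mere weak $L^2$ convergence, which is what the inequality \eqref{E:wvep} needs. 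A secondary technical point is constructing the approximating flows $u_k$ with the perturbation $f$ so that the clean subgradient variational inequality above is available; this is routine via Yosida approximation of $f$ and the abstract theory of \cite{ABM14}, but the bookkeeping of the fidelity term through the limit requires care.
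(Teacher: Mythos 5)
Your overall architecture (regularize, get a localized variational inequality for the approximations, pass to the limit using Theorem \ref{L:twinp} for the $VPV^{p(\cdot)}$ term and concavity of $v\mapsto v\cdot f(t,x,v)$ for the fidelity term) is the same as the paper's, and your steps 2--3 are essentially sound. But your step 1 has a genuine gap. The paper does \emph{not} regularize the exponent; it regularizes the nonlinearity, replacing the flux by $\frac{|\nabla u|^{2p-2}\nabla u}{\epsilon+|\nabla u|^{p}}$ and adding a strong elliptic term $\delta\mathcal B u$ with $\mathcal B$ the Riesz map of $H^r(\Omega)^N$, $r$ large, so that the approximate problem becomes an ODE in a Gelfand triple with a \emph{Carath\'eodory} right-hand side, solvable by Aubin--Lions compactness and Schaefer's fixed point (Lemma \ref{l:l1}). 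This is forced by the fact that $p=p(t,x)$ depends on time and is merely measurable. Your scheme instead raises the exponent to $p_k=\max(p,1+\tfrac1k)$ and invokes ``standard Brezis--Komura theory for nonlinear semigroups of subgradient type.'' That theory governs flows of a \emph{fixed} convex functional on $L^2(\Omega)^N$; here the functional $\Phi_k(t,u)=\int_\Omega\frac{1}{p_k(t,x)}|\nabla u|^{p_k(t,x)}\,dx$ is time-dependent (your space-time integral $\int_Q$ is not the generator of a gradient flow at all), and existence for $u'\in-\partial\Phi_k(t,u)+f$ with a $t$-dependent subdifferential requires structural continuity/monotonicity of $t\mapsto\Phi_k(t,\cdot)$ (Kenmochi--\^Otani type conditions) that a merely measurable, $Y$-semicontinuous $p(t,x)$ does not provide. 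Even outside the semigroup framework, solvability of the $p_k(t,x)$-Laplacian flow for discontinuous exponents is not ``routine''; it is precisely the problem the paper's double regularization is designed to bypass. (The paper does use the subdifferential route, but only in Section \ref{s:ssol}, where $p$ is assumed time-independent.)

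A secondary but real error: with your regularization, $|Z_k|=|\nabla u_k|^{p_k-1}=|\nabla u_k|^{1/k}$ on $Y$, which exceeds $1$ wherever $|\nabla u_k|>1$, so the claim ``$|Z_k|\leq 1$ on $Y$ uniformly'' is false. The limit bound $\|Z\|_{L^\infty(Y;\M)}\leq 1$ can still be recovered (from $\|Z_k\|_{L^{k+1}(Y)}^{k+1}\leq\int_Y|\nabla u_k|^{1+1/k}\leq C$ one gets $\|Z\|_{L^q(Y)}\leq|Y|^{1/q}$ for every $q$, then let $q\to\infty$), but this needs to be said; by contrast the paper's choice of flux satisfies $|Z|\leq|\nabla u|^{p-1}\leq 1$ on $Y$ pointwise and uniformly in $\epsilon$, which is exactly why that regularization is used. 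Finally, your parenthetical that concavity of $v\mapsto v\cdot f(t,x,v)$ keeps the perturbed functional ``essentially convex'' is a red herring: in the paper the fidelity term is handled at the existence stage purely as a compact Nemytskii perturbation via \eqref{fid}, and the concavity is used only once, at the very end, to pass to the limit superior in $\int_Q f(\cdot,u_m)\cdot(u_m-w)$ — which you do identify correctly later in your write-up.
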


\subsection{Disgression: on solvability of some nonlinear Cauchy problems in Hilbert triples} 

Assume that there are two Hilbert spaces, $X\subset Y,$ with
continuous embedding operator $i:X\to Y$, and $i(X)$ is dense in
$Y$. The adjoint operator $i^*:Y^*\to X^*$ is continuous and,
since $i(X)$ is dense in $Y$, one-to-one. Since $i$ is one-to-one,
$i^*(Y^*)$ is dense in $X^*$, and one may identify $Y^*$ with a
dense subspace of $X^*$. Due to the Riesz representation theorem,
one may also identify $Y$ with $Y^*$. We arrive at the chain of
inclusions:
\begin{equation} X\subset Y\equiv Y^*\subset X^*.\end{equation}
Both embeddings here are dense and continuous. Observe that in
this situation, for $f\in Y, u\in X$, their scalar product in $Y$
coincides with the value of the functional $f$ from $X^*$ on the
element $u\in X$:
\begin{equation} \label{ttt} (f,u)_Y=\langle f,u \rangle. \end{equation}
Such triples $(X,Y, X^*)$ are called \emph{Hilbert triples} (sometimes also referred to as Gelfand or Lions triples), see, e.g., \cite{Te79,ZV08} for more details. 

\begin{lemma} \label{l:l1} Let $$X\subset Y\subset X^*$$ be a Hilbert triple. Let $\mathcal{A}:X\to X^*$ be a linear continuous operator such that $$\langle \mathcal A u , u \rangle \geq \alpha \|u\|_X^2$$ for all $u\in X$ and some common $\alpha>0$. Let $V$ be a Banach space such that $$X\subset V\subset Y$$ where the first embedding is compact and the second is continuous. Assume that both $X$ and $V$ are separable. Assume moreover that \be\label{e.thet}\|u\|_V\leq \|u\|_X^\theta \|u\|_Y^{1-\theta}\ee for some $\theta\in (0,1)$. Let $$\mathcal{Q}_L, \mathcal{Q}_M:(0,T)\times V\to X^*$$ be Carath\'eodory functions, i.e., $\mathcal{Q}_L(t,\cdot), \mathcal{Q}_M(t,\cdot)$ are continuous for a.a. $t\in (0,T)$ and $\mathcal{Q}_L(\cdot,u), \mathcal{Q}_M(\cdot,u)$ are measurable for all $u\in V$. Assume that \be\label{e:q1}\|\mathcal{Q}_L (t,u)\|_{X^*} \leq C(F(t)+\|u\|_V), \ee \be\label{e:q2}\|\mathcal{Q}_M (t,u)\|_{X^*} \leq C(F(t)+\|u\|_V^{m}),\  \langle \mathcal{Q}_M (t,u) , u \rangle \leq 0. \ee for all $u \in X$ and a.a. $t\in (0,T)$, where $F\in L^2(0,T)$ is independent of $u$, and $m\geq 1$, $m\theta <1$. Then the Cauchy problem \begin{equation}\label{E:acp1} u^\prime(t) +\mathcal Au(t)=\mathcal Q_L(t,u(t))+\mathcal Q_M(t,u(t)), \quad  u|_{t=0}=u_0,
\end{equation} has a solution in the class \be L^{2}(0,T;X)\cap H^{1}(0,T;X^*)\cap L^{2/\theta}(0,T;V) \cap C([0,T];Y)\ee for every $u_0\in Y$. \end{lemma}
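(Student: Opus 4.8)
The plan is to prove Lemma \ref{l:l1} by a Galerkin approximation scheme combined with the classical Carath\'eodory existence theorem for systems of ODEs and a compactness argument. First I would fix a sequence of finite-dimensional subspaces $X_k \subset X$ whose union is dense in $X$ (possible since $X$ is separable), and write $P_k$ for the $Y$-orthogonal projection onto $X_k$. Seeking $u_k(t) = \sum_{j=1}^k c_j^k(t) e_j \in X_k$ solving the projected system $(u_k'(t), e_j)_Y + \langle \mathcal{A} u_k(t), e_j\rangle = \langle \mathcal{Q}_L(t,u_k(t)) + \mathcal{Q}_M(t,u_k(t)), e_j\rangle$ for $j=1,\dots,k$ with $u_k(0) = P_k u_0$, the right-hand side is a Carath\'eodory function of $(t, c^k)$ with at most polynomial growth, so Carath\'eodory's theorem gives a local absolutely continuous solution. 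The a priori estimates below will show it does not blow up, hence extends to $[0,T]$.

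Next I would derive the a priori estimates. Testing the Galerkin system with $u_k(t)$ itself and using coercivity of $\mathcal{A}$, the sign condition $\langle \mathcal{Q}_M(t,u_k), u_k\rangle \le 0$, the bound \eqref{e:q1}, the interpolation inequality \eqref{e.thet}, and Young's inequality, I get
\begin{equation*}
\frac{1}{2}\frac{d}{dt}\|u_k(t)\|_Y^2 + \alpha \|u_k(t)\|_X^2 \le C\big(F(t) + \|u_k(t)\|_V\big)\|u_k(t)\|_X \le \frac{\alpha}{2}\|u_k(t)\|_X^2 + C F(t)^2 + C\|u_k(t)\|_X^{2\theta}\|u_k(t)\|_Y^{2(1-\theta)}.
\end{equation*}
Since $\theta < 1$, a further Young inequality with exponents $1/\theta$ and $1/(1-\theta)$ absorbs the $\|u_k\|_X^{2\theta}$ term into $\frac{\alpha}{2}\|u_k\|_X^2$ at the cost of a term $C\|u_k\|_Y^2$, and Gronwall then gives a uniform bound for $u_k$ in $L^\infty(0,T;Y) \cap L^2(0,T;X)$, and therefore, via \eqref{e.thet}, in $L^{2/\theta}(0,T;V)$. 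For the time derivative, I test with arbitrary $v \in X_k$, use $\|P_k\|_{X \to X} $ bounded (which can be arranged by choosing the $e_j$ compatibly, or circumvented by estimating $\|u_k'\|$ in a weaker norm and noting it suffices), bound $\|\mathcal{Q}_M(t,u_k)\|_{X^*} \le C(F(t) + \|u_k\|_V^m)$ and observe that $u_k \in L^{2/\theta}(0,T;V)$ with $m\theta < 1$ gives $\|u_k\|_V^m \in L^{2/(m\theta)}(0,T)$ with $2/(m\theta) > 2$, so in particular $\mathcal{Q}_M(\cdot, u_k)$ is bounded in $L^2(0,T;X^*)$; likewise $\mathcal{Q}_L(\cdot,u_k)$ and $\mathcal{A}u_k$ are bounded in $L^2(0,T;X^*)$. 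Hence $u_k'$ is bounded in $L^2(0,T;X^*)$, i.e. $u_k$ is bounded in $H^1(0,T;X^*)$.

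Then I pass to the limit. The bounds give a subsequence with $u_k \rightharpoonup u$ weakly in $L^2(0,T;X) \cap H^1(0,T;X^*)$ and weakly-$*$ in $L^\infty(0,T;Y)$; by the Aubin--Lions--Simon lemma (using $X \hookrightarrow V$ compact and $V \hookrightarrow Y \hookrightarrow X^*$ continuous) we get $u_k \to u$ strongly in $L^2(0,T;V)$, hence, along a further subsequence, $u_k(t) \to u(t)$ in $V$ for a.e. $t$, and (since the $u_k$ are bounded in $L^{2/\theta}(0,T;V)$ with $2/\theta > 2$) the convergence $u_k \to u$ is also strong in $L^r(0,T;V)$ for every $r < 2/\theta$, in particular in $L^m(0,T;V)^m$-type norms. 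By continuity of $\mathcal{Q}_L(t,\cdot), \mathcal{Q}_M(t,\cdot)$ and the growth bounds together with a Vitali/dominated-convergence argument, $\mathcal{Q}_L(\cdot, u_k) \to \mathcal{Q}_L(\cdot, u)$ and $\mathcal{Q}_M(\cdot, u_k) \to \mathcal{Q}_M(\cdot, u)$ strongly in $L^2(0,T;X^*)$ (the nontrivial integrability margin $2/(m\theta) > 2$ for the $\mathcal{Q}_M$ term is exactly what provides uniform integrability). Fixing $v \in X_j$ and a scalar test function $\varphi \in C^\infty_c(0,T)$, I multiply the Galerkin identity by $\varphi$, integrate in $t$, and let $k \to \infty$; density of $\bigcup_j X_j$ in $X$ and the established convergences yield the identity $\langle u'(t), v\rangle + \langle \mathcal{A}u(t), v\rangle = \langle \mathcal{Q}_L(t,u(t)) + \mathcal{Q}_M(t,u(t)), v\rangle$ for all $v \in X$, a.e. in $t$. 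The membership $u \in C([0,T];Y)$ follows from the standard Lions--Magenes interpolation lemma for $L^2(0,T;X) \cap H^1(0,T;X^*)$, and passing to the limit in the weak formulation against test functions nonvanishing at $t=0$ recovers $u(0) = u_0$ in $Y$.

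The main obstacle is the treatment of the superlinear, a priori only $V$-controlled term $\mathcal{Q}_M$: its growth exponent $m \ge 1$ combined with the interpolation exponent $\theta$ is precisely balanced by the hypothesis $m\theta < 1$, which guarantees that $\|u_k\|_V^m$ lies in $L^{2/(m\theta)}(0,T)$ with exponent strictly larger than $2$, giving the uniform integrability needed both for the $H^1(0,T;X^*)$ bound on $u_k'$ and for passing to the limit in $\mathcal{Q}_M(\cdot,u_k)$. The sign condition $\langle \mathcal{Q}_M(t,u),u\rangle \le 0$ is what keeps this dangerous term out of the energy estimate entirely, so that the a priori bounds are driven only by the benign $\mathcal{Q}_L$ term and the coercive $\mathcal{A}$. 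A minor technical point to handle with care is that $P_k$ need not be bounded on $X$ for an arbitrary basis; this is dealt with either by choosing the $e_j$ to be a "Galerkin basis" (e.g. eigenfunctions of $\mathcal{A}$ viewed as an operator from $X$ to $X^*$, which exists when $\mathcal{A}$ is, say, symmetric — or one works with $\mathcal{A} + \lambda I$) so that $P_k$ is simultaneously orthogonal in $Y$ and bounded in $X$, or by noting that the time-derivative bound is only needed in $X^*$, where $P_k^* : X^* \to X_k^*$ is automatically a contraction, which suffices for the compactness argument.
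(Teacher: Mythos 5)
Your Galerkin argument is sound, but it takes a genuinely different route from the paper, which does not discretize at all: the paper works directly in $W:=L^{2}(0,T;X)\cap H^{1}(0,T;X^{*})$, shows via the Aubin--Lions--Simon lemma and the interpolation inequality \eqref{e.thet} that the embeddings $W\subset L^{2}(0,T;V)$ and $W\subset L^{2m}(0,T;V)$ are compact (the latter because $2m<2/\theta$), invokes the Lucchetti--Patrone theorem to get continuity of the Nemytskii operators $\mathcal Q_L,\mathcal Q_M$ into $L^{2}(0,T;X^{*})$, and then observes that the linear solution map $\tilde{\mathcal A}^{-1}$ of $u'+\mathcal Au=g$, $u(0)=u_0$ is continuously invertible $W\to Y\times L^{2}(0,T;X^{*})$; the problem becomes the fixed-point equation $u=\lambda\tilde{\mathcal A}^{-1}(0,\mathcal Q_L(u)+\mathcal Q_M(u))$, and Schaefer's theorem applies because the same energy estimate you derive (coercivity plus the sign condition on $\mathcal Q_M$, interpolation, Young, Gr\"onwall) gives $\lambda$-uniform bounds. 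Both proofs hinge on exactly the same two ingredients --- the energy estimate in which $m\theta<1$ and $\langle\mathcal Q_M(t,u),u\rangle\le 0$ tame the superlinear term, and the compactness supplied by $X\subset V$ compact together with \eqref{e.thet} --- so the mathematical content is identical. What the paper's route buys is the complete avoidance of the projection issue you rightly flag (no special basis, no boundedness of $P_k$ on $X$), at the price of relying on cited black boxes for the linear isomorphism and the Nemytskii continuity; your route is more elementary and self-contained but must resolve the $P_k$ question, which your proposed fix handles correctly since $X\subset Y$ is compact here (so a basis of eigenfunctions of the canonical isomorphism is orthogonal in both $X$ and $Y$), and in any case the limit $u$ recovers the full $H^1(0,T;X^*)$ regularity a posteriori from the equation itself once the right-hand side is known to lie in $L^2(0,T;X^*)$.
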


\begin{proof}

1. Consider the Banach space $$W:=L^{2}(0,T;X)\cap H^{1}(0,T;X^*)$$ with the natural intersection norm. By the Aubin-Lions-Simon lemma \cite{Si87}, the embedding $W\subset L^2(0,T;V)$ is compact.  Moreover, by \cite[Lemma 2.2.7 and Corollary 2.2.3]{ZV08} any function from $W$ may be considered as an element of $C([0,T];Y)$, and the embedding $W\subset C([0,T];Y)$ is continuous. Due to \eqref{e.thet}, \be\label{e:tet1} \|u\|_{L^{2/\theta}(0;T;V)}\leq \|u\|^\theta_{L^{2}(0,T;X)}\|u\|^{1-\theta}_{L^{\infty}(0,T;Y)},\ee therefore the embedding $W\subset L^{2/\theta}(0;T;V)$ is also continuous. By \cite[Corollary 6]{Si87}, the embedding $W\subset L^{2m}(0;T;V)$ is compact (as $2m<2/\theta$).
The Nemytskii operators $$Q_L(\cdot,u): L^2(0,T; V)\to L^2(0,T; X^*),$$ $$Q_M(\cdot,u): L^{2m}(0,T; V)\to L^2(0,T; X^*)$$ are bounded and continuous by the Lucchetti-Patrone theorem \cite{LP80,mt5}. Hence, the Nemytskii operators $$Q_L(\cdot,u), Q_M(\cdot,u): W\to L^2(0,T; X^*)$$ are continuous and compact. 

2. Consider the family of Cauchy problems \begin{equation}\label{E:acp1l} u^\prime +\mathcal Au=\lambda(\mathcal Q_L(t,u)+\mathcal Q_M(t,u)), \quad  u|_{t=0}=u_0,
\end{equation} where $\lambda \in [0,1]$ is a parameter. We are going to prove that the solutions of \eqref{E:acp1l} are a priori bounded in the space $W$, uniformly in $\lambda$. Taking the scalar product in $Y$ of \eqref{E:acp1l} and $u$ a.e. in $(0,T)$, we infer \be\frac 1 2 \frac {d} {dt} \|u\|_Y^2+\alpha \|u\|_X^2\leq \lambda \langle \mathcal Q_L(t,u)+Q_M(t,u), u \rangle\leq C(F(t)+\|u\|_V)\|u\|_X.\ee Therefore, \be\frac {d} {dt} \|u\|_Y^2+\alpha \|u\|_X^2\leq  C(F^2(t)+\|u\|^2_V)\leq  C(F^2(t)+\|u\|^{2\theta}_X\|u\|^{2(1-\theta)}_Y),\ee and by Young's inequality \be\label{e:512}\frac {d} {dt} \|u\|_Y^2+\frac \alpha 2 \|u\|_X^2\leq  C(F^2(t)+\|u\|^2_V)\leq  C(F^2(t)+\|u\|^{2}_Y).\ee By Gr\"onwall's lemma, \be \label{e:513}\|u\|_{L^{2}(0,T;X)}+\|u\|_{L^{\infty}(0,T;Y)}\leq C,\ee hence, by \eqref{e:tet1}, \be\|u\|_{L^{2m}(0,T;V)}\leq C.\ee Consequently, \eqref{E:acp1l} together with \eqref{e:q1}, \eqref{e:q2} yields \be\|u^\prime\|_{L^{2}(0,T;X^*)}\leq C.\ee

3.  By \cite[Lemma 3.1.3]{ZV08} and Banach's continuous inverse theorem, the linear operator \[\tilde{\mathcal A}: W\to Y \times L^2(0,T;X^*), \quad \tilde{\mathcal A}(u)=(u|_{t=0}, u^\prime +\mathcal Au)\] is continuously invertible. Now \eqref{E:acp1l} can be rewritten as \begin{equation}\label{E:acp3l} u=\lambda \tilde{\mathcal A}^{-1}(0,\mathcal Q_L(\cdot,u)+\mathcal Q_M(\cdot,u)). 
\end{equation} The operator $\tilde{\mathcal Q}(u):=\tilde{\mathcal A}^{-1}(0,\mathcal Q_L(\cdot,u)+\mathcal Q_M(\cdot,u))$ is continuous and compact in $W$, and the solutions of \eqref{E:acp3l} are uniformly bounded in $W$. By Schaefer's fixed point theorem \cite{Ev10}, $\tilde{\mathcal Q}$ has a fixed point, which is obviously a solution of \eqref{E:acp1}.
\end{proof}

\subsection{Proof of Theorem \ref{mthm}.}

In order to prove the theorem we first need to study the following auxiliary problem:  \begin{equation}
\label{e:maina}
\left\{
\begin{array}{ll}
\displaystyle
\partial_t u +\delta \mathcal  B u= \divo \left(\frac {|\nabla u|^{2p(\cdot)-2}\nabla u}{\epsilon+|\nabla u|^{p(\cdot)}}\right)+f(\cdot,u),
\\
u(0, x) = u_{0}(x).
\end{array}
\right.
\end{equation} Here $\epsilon, \delta$ are positive parameters; the operator $\mathcal  B$ and the boundary conditions are to be specified below.

Consider the Hilbert triple $$(H^r(\Omega)^N, L^2(\Omega)^N, \left((H^r(\Omega))^*\right)^N)$$ where $$r>\left(1+\frac n 2\right)(p_+ +1)$$ is a fixed number.  Denote by $\mathcal  B$ the Riesz bijection between the spaces $(H^r)^N$ and $((H^r)^*)^N$. 

The weak form of \eqref{e:maina} (with a certain implicit boundary condition which is of no importance to us) is the following Cauchy problem, where the first equality is understood as an ODE in the space $((H^r)^*)^N$, whereas the second equality is in the sense of the space $(L^2)^N$:
\begin{equation}\label{E:apprp1} u^\prime +\delta \mathcal Bu=\mathcal Q(t,u)+f(t,\cdot,u), \quad  u|_{t=0}=u_0.
\end{equation}  
Here the operator $\mathcal Q:(0,T)\times C^1(\overline\Omega)^N\to \left((H^r(\Omega))^*\right)^N$ is determined by the duality $$\langle \mathcal Q(t,u), w \rangle= -\left(\frac {|\nabla u|^{2p(t,\cdot)-2}\nabla u}{\epsilon+|\nabla u|^{p(t,\cdot)}}, \nabla w\right), \quad \forall  w \in (H^r)^N,$$ and the prime stands for the time derivative.

\begin{lemma} \label{L:ap} Let $u_0\in L^2(\Omega)^N$, $\tilde f\in L^2(Q)$. The Cauchy problem \eqref{E:apprp1} admits a solution $u$ in the class \begin{equation}\label{e:skl} L^{2}(0,T;(H^r(\Omega))^N)\cap H^{1}(0,T;((H^r(\Omega))^*)^N)\cap C([0,T];(L^2(\Omega))^N). \end{equation} The solution satisfies the following inequality: \begin{multline} \label{E:wvepap} 
\|u(t_*)-w(t_*)\|^2 +2\int_{Q_{t_*}} \partial_t w(t,x) \cdot u(t,x)\,dx\,dt+2\, VPV^{p(\cdot)}(u)({Q_{t_*} \cap U})\\ \leq \|u_0-w(0)\|^2 +\|
w(t_*)\|^2-\|w(0)\|^2+2
\int_{Q_{t_*} \cap U} \frac {|\nabla u(t,x)|^{2p(t,x)-2}}{\epsilon+|\nabla u(t,x)|^{p(t,x)}}\nabla u(t,x)\cdot\nabla w(t,x) \,dx\,dt\\
+2\int_{Q_{t_*}\backslash U}\frac{|\nabla w(t,x)|^{2p(t,x)-2}}{\epsilon+|\nabla w(t,x)|^{p(t,x)}}\nabla w(t,x) \cdot (\nabla w(t,x)-\nabla u(t,x))\,dx\,dt\\ +2\int_{Q_{t_*} \cap U} f(t,x,u(t,x))\cdot(u(t,x)-w(t,x))\,dx\,dt\,\\ +2\epsilon |Q_{t_*} \cap U|+\frac \delta 2 \|w\|_{L^{2}(0,t_*;(H^r(\Omega))^N)}
\end{multline} for every $t_*\in [0,T]$, every open set $U$, $Y\subset U\subset Q$, and for every sufficiently regular test function $w:\overline Q \to \R^N$.\end{lemma}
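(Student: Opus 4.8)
The plan is to obtain the solution of \eqref{E:apprp1} by a direct application of Lemma \ref{l:l1} with the Hilbert triple $(X,Y,X^*)=((H^r)^N,(L^2)^N,((H^r)^*)^N)$, the operator $\mathcal A=\delta\mathcal B$, and the intermediate space $V=C^1(\overline\Omega)^N$. First I would check the hypotheses of Lemma \ref{l:l1}: coercivity of $\delta\mathcal B$ is immediate since $\mathcal B$ is the Riesz isomorphism, so $\langle\delta\mathcal Bu,u\rangle=\delta\|u\|_{(H^r)^N}^2$; the compact embedding $X\subset V$ and continuous embedding $V\subset Y$ hold because $r>(1+n/2)(p_++1)>n/2+1$, so $H^r\hookrightarrow C^1(\overline\Omega)$ compactly by Sobolev; the interpolation inequality \eqref{e.thet} with some $\theta\in(0,1)$ follows from the Gagliardo--Nirenberg inequality for $C^1$ norms between $H^r$ and $L^2$. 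Then I split the right-hand side nonlinearity: set $\mathcal Q_M(t,u):=\mathcal Q(t,u)$ (the regularized $p$-Laplacian part) and $\mathcal Q_L(t,u):=f(t,\cdot,u)$ (the fidelity term). The fidelity term satisfies the linear-growth bound \eqref{e:q1} via \eqref{fid} and the embedding $V\hookrightarrow L^\infty$, hence $\|f(t,\cdot,u)\|_{(H^r)^*}\le\|f(t,\cdot,u)\|_{L^2}\le\|\tilde f(t,\cdot)\|_{L^2}+C\|u\|_{L^2}\le C(F(t)+\|u\|_V)$ with $F=\|\tilde f\|_{L^2(\Omega)}\in L^2(0,T)$. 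For $\mathcal Q_M$ I need \eqref{e:q2}: the sign condition $\langle\mathcal Q_M(t,u),u\rangle=-\int_\Omega\frac{|\nabla u|^{2p-2}|\nabla u|^2}{\epsilon+|\nabla u|^p}\,dx\le0$ is clear, and the growth bound holds with $m=1$ since $\frac{|\nabla u|^{2p-1}}{\epsilon+|\nabla u|^p}\le|\nabla u|^{p-1}\le 1+|\nabla u|^{p_+}\le C(1+\|u\|_V^{p_+})$, but to fit the exact form $C(F+\|u\|_V^m)$ with $m\theta<1$ one should take $m=p_+$ and must verify $p_+\theta<1$ — this is exactly why $r$ is chosen so large, so that the interpolation exponent $\theta=\theta(r,n)$ is small enough; I would make that dependence explicit. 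Carath\'eodory measurability of $\mathcal Q_M,\mathcal Q_L$ in $t$ is routine from measurability of $p(t,x)$ and $f(t,x,\cdot)$. Thus Lemma \ref{l:l1} delivers a solution in $L^2(0,T;(H^r)^N)\cap H^1(0,T;((H^r)^*)^N)\cap C([0,T];(L^2)^N)$ (the extra $L^{2/\theta}(0,T;V)$ regularity from the lemma is a bonus we can discard), which is the class \eqref{e:skl}.

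The second and harder part is deriving the energy-type inequality \eqref{E:wvepap}. Here I would test the equation \eqref{E:apprp1} against $u-w$, where $w$ is a sufficiently regular test function, which is legitimate because $w$ can be used as a test function in the weak formulation (both $u'$ and $\delta\mathcal Bu$ pair with $u-w\in(H^r)^N$, and $\mathcal Q(t,u),f(t,\cdot,u)$ pair with it too). Integrating over $Q_{t_*}$ and using the standard integration-by-parts identity $2\int_0^{t_*}\langle u',u-w\rangle=\|u(t_*)-w(t_*)\|^2-\|u_0-w(0)\|^2-\|w(t_*)\|^2+\|w(0)\|^2+2\int_{Q_{t_*}}\partial_t w\cdot u$, valid for functions in the class \eqref{e:skl} with $w\in H^1(0,T;L^2)$, produces all the time-derivative terms in \eqref{E:wvepap}. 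The term $\delta\int_0^{t_*}\langle\mathcal Bu,u-w\rangle=\delta\int_0^{t_*}\|u\|_{(H^r)^N}^2-\delta\int_0^{t_*}(\mathcal Bu,w)$: the first is nonnegative and can be dropped, while the cross term is bounded by $\frac\delta2\|u\|_{L^2(0,t_*;(H^r)^N)}^2+\frac\delta2\|w\|_{L^2(0,t_*;(H^r)^N)}^2$ — but the first of these is not controlled uniformly; so instead I would simply keep $-\delta(\mathcal Bu,w)$ and bound it by $\frac\delta2\|u\|^2+\frac\delta2\|w\|^2$ and absorb $\frac\delta2\|u\|^2$ into the dropped $\delta\|u\|^2$ term, leaving precisely the $\frac\delta2\|w\|_{L^2(0,t_*;(H^r)^N)}$-type remainder that appears in \eqref{E:wvepap} (modulo the power, which I would double-check against the stated inequality). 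The fidelity term directly gives $2\int_{Q_{t_*}}f(t,x,u)\cdot(u-w)$.

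The genuinely delicate step is handling the regularized $p$-Laplacian term $-2\int_0^{t_*}\langle\mathcal Q(t,u),u-w\rangle = 2\int_{Q_{t_*}}\frac{|\nabla u|^{2p-2}\nabla u}{\epsilon+|\nabla u|^p}\cdot(\nabla u-\nabla w)$, which must be split over $U$ and $Q\setminus U$ and then estimated so as to recover $2\,VPV^{p(\cdot)}(u)(Q_{t_*}\cap U)$ on the left. On $Q_{t_*}\cap U$ I write $\frac{|\nabla u|^{2p-2}\nabla u}{\epsilon+|\nabla u|^p}\cdot\nabla u=\frac{|\nabla u|^{2p}}{\epsilon+|\nabla u|^p}\ge|\nabla u|^p-\epsilon$ (since $\frac{a^2}{\epsilon+a}\ge a-\epsilon$ for $a=|\nabla u|^p\ge0$), which produces $\int_{Q_{t_*}\cap U}|\nabla u|^p-\epsilon|Q_{t_*}\cap U|$; comparing $\int_{Q_{t_*}\cap U}|\nabla u|^{p(t,x)}\,dx\,dt$ with $VPV^{p(\cdot)}(u)(Q_{t_*}\cap U)=VPV(u)(Q_{t_*}\cap Y)+\int_{(Q_{t_*}\cap U)\setminus Y}|\nabla u|^{p}$ requires the pointwise bound $|\nabla u|\ge VPV$-density on $Y$, i.e.\ $\int_{Q_{t_*}\cap Y}|\nabla u|\,dx\,dt=VPV(u)(Q_{t_*}\cap Y)$, which holds since $u$, being in \eqref{e:skl}, has $\nabla u\in L^2$ and hence $VPV(u)$ restricted to any set equals $\int|\nabla u|$. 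The remaining $-\nabla w$ piece on $U$ is exactly the $2\int_{Q_{t_*}\cap U}\frac{|\nabla u|^{2p-2}}{\epsilon+|\nabla u|^p}\nabla u\cdot\nabla w$ term in \eqref{E:wvepap} (with a sign to be tracked carefully). On $Q_{t_*}\setminus U$, I would not try to estimate $\frac{|\nabla u|^{2p-2}\nabla u}{\epsilon+|\nabla u|^p}\cdot(\nabla u-\nabla w)$ directly; instead I use the monotonicity inequality $\bigl(\sigma_\epsilon(A)-\sigma_\epsilon(B)\bigr)\cdot(A-B)\ge0$ for the monotone map $\sigma_\epsilon(A)=\frac{|A|^{2p-2}A}{\epsilon+|A|^p}$ (its monotonicity being the $\epsilon$-regularized analogue of monotonicity of $A\mapsto|A|^{p-2}A$), with $A=\nabla u$, $B=\nabla w$, which gives $\sigma_\epsilon(\nabla u)\cdot(\nabla u-\nabla w)\ge\sigma_\epsilon(\nabla w)\cdot(\nabla u-\nabla w)$, i.e.\ $\sigma_\epsilon(\nabla u)\cdot(\nabla u-\nabla w)\ge-\sigma_\epsilon(\nabla w)\cdot(\nabla w-\nabla u)$, turning the $Q_{t_*}\setminus U$ contribution into $-2\int_{Q_{t_*}\setminus U}\sigma_\epsilon(\nabla w)\cdot(\nabla w-\nabla u)$, which after moving to the right-hand side is precisely $+2\int_{Q_{t_*}\setminus U}\frac{|\nabla w|^{2p-2}}{\epsilon+|\nabla w|^p}\nabla w\cdot(\nabla w-\nabla u)$. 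Collecting everything and dropping the nonnegative $\delta\|u\|^2$ term yields \eqref{E:wvepap}. The main obstacle throughout is bookkeeping: making sure every sign, every power of $\delta$, and the precise decomposition of $VPV^{p(\cdot)}(u)(Q_{t_*}\cap U)$ against the Lebesgue integrals match the stated inequality, and verifying that the regularized flux $\sigma_\epsilon$ is indeed monotone (which I expect follows from writing it as a gradient of a convex function of $|A|$, or by a direct computation).
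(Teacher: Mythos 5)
Your proposal is correct and follows essentially the same route as the paper: existence via Lemma \ref{l:l1} with the identical choices $X=(H^r(\Omega))^N$, $V=C^1(\overline\Omega)^N$, $\mathcal Q_M=\mathcal Q$, $\mathcal Q_L=f$, $m=p_+$, $\theta=1/(p_++1)$, and the inequality \eqref{E:wvepap} obtained by testing with $2(u-w)$, integrating in time, using $\frac{a^2}{\epsilon+a}\geq a-\epsilon$ on $Q_{t_*}\cap U$ to pass from the regularized modular to $VPV^{p(\cdot)}(u)$ at the cost of the $2\epsilon|Q_{t_*}\cap U|$ term, invoking monotonicity of the regularized flux $A\mapsto\frac{|A|^{2p-2}A}{\epsilon+|A|^p}$ on $Q_{t_*}\setminus U$, and absorbing the $\delta\langle\mathcal Bu,w\rangle$ cross term by Cauchy's inequality. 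The one point you defer, monotonicity of the regularized flux, is settled in the paper exactly as you anticipate, by exhibiting the convex potential $\xi\mapsto\frac{|\xi|^p}{p}-\frac{\epsilon}{p}\log(\epsilon+|\xi|^p)$.
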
  \begin{proof}

1. We are going to apply Lemma \ref{l:l1} with $$X=H^r(\Omega)^N,\, Y=L^2(\Omega)^N,\, V=C^1(\overline\Omega)^N,\, \mathcal Q_M=\mathcal Q,\, \mathcal Q_L=f,$$ $$\mathcal A=\delta \mathcal B,\, m=p_+,\, \theta=
\frac 1 {p_+ +1}.$$ A classical interpolation inequality in  Sobolev Hilbert scale \cite{Kr72} gives \be\|u\|_{H^{r\theta}}\leq C\|u\|_{H^r}^\theta \|u\|_{L^2}^{1-\theta}.\ee Since $r\theta >1+\frac n 2$, we have by Sobolev embedding that \be\|u\|_{C^1(\overline\Omega)}\leq C \|u\|_{H^{r\theta}},\ee so \eqref{e.thet} is satisfied. Moreover, the embedding $H^r(\Omega)^N\subset C^1(\overline\Omega)^N$ is compact. The operators $\mathcal{Q},f:(0,T)\times C^1(\overline\Omega)^N\to \left((H^r(\Omega))^*\right)^N$ satisfy the Carath\'eodory condition. Moreover, $f$ satisfies condition \eqref{e:q1} since \be \|f(t,\cdot,u)\|_{\left((H^r(\Omega))^*\right)^N}\leq \|f(t,\cdot,u)\|_{L^2(\Omega)^N}\leq \|\tilde f(t)\|_{L^2(\Omega)}+C \|u\|_{C^1(\overline\Omega)^N},\ee and $\tilde f\in L^2(0,T;L^2(\Omega))$. Furthermore, $\mathcal Q$ satisfies \eqref{e:q2} since \begin{multline} \label{e:q2l} \|\mathcal{Q} (t,u)\|_{\left((H^r(\Omega))^*\right)^N} \leq \|\mathcal{Q} (t,u)\|_{\left((H^1(\Omega))^*\right)^N}\leq \int_\Omega\left|\frac {|\nabla u|^{2p(\cdot)-2}\nabla u}{\epsilon+|\nabla u|^{p(\cdot)}}\right|\\ \leq \int_\Omega\left(1+|\nabla u|^{p_+-1}\right)\leq C(1+\|u\|_{C^1(\overline\Omega)^N}^{p_+}).\end{multline}  Thus, the conditions of Lemma \ref{l:l1} are met, and the existence of solution $u$ follows immediately. 

2. We now fix a sufficiently smooth function $w:\overline Q\to \R$ and an open set $U$, $Y\subset U\subset Q$. Test \eqref{E:apprp1} with $2(u-w)(t)$, $0\leq t\leq t_*\leq T$, in the sense of $(((H^r(\Omega))^*)^N, H^r(\Omega)^N)$-duality, and integrate in time to obtain \begin{multline} \label{E:wvepap1} 
\|u(t_*)-w(t_*)\|^2 +2\int_{Q_{t_*}} \partial_t w(t,x) \cdot u(t,x)\,dx\,dt+2 \int_{Q_{t_*} \cap U} \frac{|\nabla u(t,x)|^{2p(t,x)}}{\epsilon+|\nabla u(t,x)|^{p(t,x)}}\,dx\,dt\\ \leq \|u_0-w(0)\|^2 +\|
w(t_*)\|^2-\|w(0)\|^2+2
\int_{Q_{t_*} \cap U} \frac {|\nabla u(t,x)|^{2p(t,x)-2}}{\epsilon+|\nabla u(t,x)|^{p(t,x)}}\nabla u(t,x)\cdot\nabla w(t,x) \,dx\,dt\\
+2\int_{Q_{t_*}\backslash U}\frac{|\nabla u(t,x)|^{2p(t,x)-2}}{\epsilon+|\nabla u(t,x)|^{p(t,x)}}\nabla u(t,x) \cdot (\nabla w(t,x)-\nabla u(t,x))\,dx\,dt\\ +2\int_{Q_{t_*}} f(t,x,u(t,x))\cdot(u(t,x)-w(t,x))\,dx\,dt -2\delta\int_0^{t_*} \langle \mathcal B u(t), u(t)-w(t)\rangle\,dt.
\end{multline}
Note that the following finite-dimensional inequality holds \be\label{e:monot}\left(\frac {|\xi|^{2p-2}}{\epsilon+|\xi|^{p}}\xi - \frac {|\eta|^{2p-2}}{\epsilon+|\eta|^{p}}\eta \right)\cdot (\xi-\eta)\geq 0, \quad  p\geq 1, \epsilon>0,\ \xi,\eta\in \M.\ee This follows from the convexity of the potential $$\xi\mapsto \frac {|\xi|^p} p - \frac \epsilon p \log (\epsilon + |\xi|^p), \quad \xi\in \M.$$
Applying Cauchy's inequality, inequality \eqref{e:monot} and making some rearrangements, we derive from \eqref{E:wvepap1} that
\begin{multline} \label{E:wvepap2} 
\|u(t_*)-w(t_*)\|^2 +2\int_{Q_{t_*}} \partial_t w(t,x) \cdot u(t,x)\,dx\,dt+2 \int_{Q_{t_*} \cap U} |\nabla u(t,x)|^{p(t,x)}\,dx\,dt\\ \leq \|u_0-w(0)\|^2 +\|
w(t_*)\|^2-\|w(0)\|^2+2
\int_{Q_{t_*} \cap U} \frac {|\nabla u(t,x)|^{2p(t,x)-2}}{\epsilon+|\nabla u(t,x)|^{p(t,x)}}\nabla u(t,x)\cdot\nabla w(t,x) \,dx\,dt\\
+2\int_{Q_{t_*}\backslash U}\frac{|\nabla w(t,x)|^{2p(t,x)-2}}{\epsilon+|\nabla w(t,x)|^{p(t,x)}}\nabla w(t,x) \cdot (\nabla w(t,x)-\nabla u(t,x))\,dx\,dt\\ +2\int_{Q_{t_*}} f(t,x,u(t,x))\cdot(u(t,x)-w(t,x))\,dx\,dt\,\\ +2 \epsilon\int_{Q_{t_*} \cap U} \frac{|\nabla u(t,x)|^{p(t,x)}}{\epsilon+|\nabla u(t,x)|^{p(t,x)}}\,dx\,dt+\frac \delta 2 \int_0^{t_*}\langle \mathcal B w(t), w(t)\rangle\,dt,
\end{multline} and \eqref{E:wvepap} follows. 
\end{proof}

\begin{remark1} In Lemma \ref{L:ap}, the solution $u$ satisfies the a priori bound \begin{equation}\label{e:ab1} \|u\|_{L^{2}(0,T;(H^r(\Omega))^N)}+\|u\|_{H^{1}(0,T;((H^r(\Omega))^*)^N)}+\|u\|_{C([0,T];(L^2(\Omega))^N)}\leq C,\end{equation} which comes from the proof of Lemma \ref{l:l1}. We claim that the constant $C$ here does not depend on $\epsilon$ (but may depend on $\delta$). This follows from the fact that the operator $\mathcal Q_M=\mathcal Q$ satisfies estimate \eqref{e:q2} uniformly in $\epsilon$, see \eqref{e:q2l}. Moreover, from \eqref{e:512} we can derive that  \begin{equation}\label{e:ab2} \delta\|u\|^2_{L^{2}(0,T;(H^r(\Omega))^N)}\leq C,\end{equation} where $C$ is independent of $\epsilon,\delta$. \end{remark1}

\begin{proof}[Proof of Theorem \ref{mthm}] 1. Let $u$ be a solution to \eqref{E:apprp1} in the sense of Lemma \ref{L:ap}. Set $$Z=\frac {|\nabla u(t,x)|^{2p(t,x)-2}}{\epsilon+|\nabla u(t,x)|^{p(t,x)}}\nabla u(t,x).$$ Then \be\label{z1e}\|Z\|_{ L^\infty(Y;\M)}\leq 1,\ee and from \eqref{e:ab1} we derive that \be \label{zeps} \|Z\|_{L^{2}(0,T;(C(\overline\Omega))^N)}\leq C,\ee where $C$ does not depend on $\epsilon$ (but may depend on $\delta$). We will aso need that \begin{multline}\label{zvpv}\|Z\|_{L^{p_+'}(Q;\M)}^{p_+'} \leq \int_Q |\nabla u(t,x)|^{p_+'(p(t,x)-1)}\, dx\,dt\\ \int_Q (1+|\nabla u(t,x)|^{p'(t,x)(p(t,x)-1)})\, dx\,dt=|Q|+ VPV^{p(\cdot)}(u).\end{multline}

2. Since $(u,Z)$ is a solution to \eqref{E:apprp1} in the sense of Lemma \ref{L:ap}, we have \be\label{e:apx1} \left\langle\frac {du} {dt},v \right\rangle+\delta \langle\mathcal B u,v \rangle +\big(Z, \nabla v\big)=(f(\cdot,u),v) \ee a.e. on $(0,T)$,
 \begin{multline} \label{e:apx2} 
\|u(t_*)-w(t_*)\|^2 +2\int_{Q_{t_*}} \partial_t w(t,x) \cdot u(t,x)\,dx\,dt+2\, VPV^{p(\cdot)}(u)({Q_{t_*} \cap U})\\ \leq \|u_0-w(0)\|^2 +\|
w(t_*)\|^2-\|w(0)\|^2+2
\int_{Q_{t_*} \cap U} Z(t,x)\cdot\nabla w(t,x) \,dx\,dt\\
+2\int_{{Q_{t_*}}\backslash U}\frac{|\nabla w(t,x)|^{2p(t,x)-2}}{\epsilon+|\nabla w(t,x)|^{p(t,x)}}\nabla w(t,x) \cdot (\nabla w(t,x)-\nabla u(t,x))\,dx\,dt\\ +2\int_{Q_{t_*}} f(t,x,u(t,x))\cdot(u(t,x)-w(t,x))\,dx\,dt\\ +2\epsilon |{Q_{t_*} \cap U}|+\frac \delta 2 \|w\|_{L^{2}(0,t_*;(H^r(\Omega))^N)}
\end{multline} for all $t_*\in [0,T]$, and
\be \label{e:apx3} u(0)=u_0,\ee for all sufficiently smooth functions $v,w:\overline Q\to \R$ and for every open set $U$, $Y\subset U\subset Q$. 

To prove the theorem, we are going to pass to the limit in \eqref{e:apx1}, \eqref{e:apx2}, \eqref{e:apx3}, first as $\epsilon\to 0$, and then as $\delta\to 0$.

3. Take a sequence of solutions $(u_k,Z_k)$ to \eqref{E:apprp1} with $\epsilon=\epsilon_k\to 0$. Due to \eqref{e:ab1}, \eqref{zeps}, without loss of generality we have \begin{gather}u_k\to u\ \mathrm{weakly}\ \mathrm{in} \ L^{2}(0,T;(H^r(\Omega))^N), \\ u_k\to u\ \mathrm{weakly*}\ \mathrm{in} \ L^{\infty}(0,T;(L^2(\Omega))^N),\\ u_k^\prime\to u^\prime \ \mathrm{weakly}\ \mathrm{in} \ {L^{2}(0,T;((H^r(\Omega))^*)^N)}, \\ Z_k\to Z\ \mathrm{weakly*}\ \mathrm{in} \ L^{2}(0,T;L^\infty(\Omega;\M)).\end{gather} By the Aubin-Lions-Simon lemma \cite{Si87}, \be u_k\to u\ \mathrm{strongly}\ \mathrm{in} \ L^{2}(Q)^N\ \mathrm{and}\ \mathrm{in}\ C([0,T]; ((H^r(\Omega))^*)^N).\ee Hence, by a classical continuity property of Nemytskii operators \cite{Kr64}, \be f(\cdot,u_k)\to f(\cdot,u)\ \mathrm{strongly}\ \mathrm{in} \ L^{2}(Q)^N.\ee Moreover, by \cite[Corollary 6]{Si87} (cf. the proof of Lemma \ref{l:l1}),  \be u_k\to u\ \mathrm{strongly}\ \mathrm{in} \ L^{2p_+}(0,T;C^1(\overline\Omega)^N).\ee Consequently, employing \eqref{e:p-p}, \be \nabla u_k\to \nabla u\ \mathrm{strongly}\ \mathrm{in} \ L^{p(\cdot)}(Q;\M),\ee and \be VPV^{p(\cdot)}(u_k) (V)\to VPV^{p(\cdot)}(u)(V) \label{vtvlim} \ee for any Borel set $V\subset Q$. We can thus pass to the limit as $k\to \infty$ in \eqref{e:apx1},\eqref{e:apx3} and in the linear (w.r.t. $u$) and constant (w.r.t. $k$) members of \eqref{e:apx2}, and in the third term in \eqref{e:apx2}. We can also pass to the limit inferior, keeping the sign of the inequality, in the  first member of \eqref{e:apx2}, since this term is lower semicontinuous, see the $\delta$-limit below in the proof for the details of a similar reasoning. Finally, by the Lebesgue dominated convergence theorem, $$\frac{|\nabla w|^{2p(\cdot)-2}}{\epsilon_k+|\nabla w|^{p(\cdot)}}\nabla w \to |\nabla w|^{p(\cdot)-2}\nabla w$$ in $L^2(Q\backslash U)^N$. Hence, passing to the limit in \eqref{e:apx2} we get  \begin{multline} \label{e:apx2.1} 
\|u(t_*)-w(t_*)\|^2 +2\int_{Q_{t_*}} \partial_t w(t,x) \cdot u(t,x)\,dx\,dt+2\, VPV^{p(\cdot)}(u)({Q_{t_*} \cap U})\\ \leq \|u_0-w(0)\|^2 +\|
w(t_*)\|^2-\|w(0)\|^2+2
\int_{Q_{t_*} \cap U} Z(t,x)\cdot\nabla w(t,x) \,dx\,dt\\
+2\int_{{Q_{t_*}}\backslash U}|\nabla w(t,x)|^{p(t,x)-2}\nabla w(t,x) \cdot (\nabla w(t,x)-\nabla u(t,x))\,dx\,dt\\ +2\int_{Q_{t_*}} f(t,x,u(t,x))\cdot(u(t,x)-w(t,x))\,dx\,dt+\frac \delta 2 \|w\|_{L^{2}(0,t_*;(H^r(\Omega))^N)}
\end{multline} for all $t_*\in [0,T]$. Note that  \eqref{e:ab2} and \eqref{z1e} still hold for the limit. 

4. Taking $U=Q$, $w\equiv 0$ in \eqref{e:apx2.1}, and remembering \eqref{e:ab2}, we derive \be \label{e:apx4} 
\|u(t_*)\|^2 +2\, VPV^{p(\cdot)}(u)({Q_{t_*}}) \leq C(1+\delta) +\int_0^{t_*} \|u(t)\|^2 \,dt 
\ee for all $t_*\in [0,T]$, whence by Gr\"onwall's lemma \be \label{e:apx4} 
\|u(t_*)\|^2 + VPV^{p(\cdot)}(u) \leq C, 
\ee  where $C$ does not depend on $\delta$ (we assume that the possible range of $\delta$ is bounded). Then, by \eqref{zvpv} and \eqref{vtvlim}, \be\label{zbound}\|Z\|_{L^{p_+'}(Q;\M)}\leq C. \ee Moreover, \eqref{e:apx4} implies 
\be \label{l1bound} \int_Q |u(t,x)|+|\nabla u(t,x)|\,dx\,dt\leq C+\int_Q |\nabla u(t,x)|^{p(t,x)}\,dx\,dt\leq C.\ee Observe also that since $H^r\subset W^{1,p_+}$, relations \eqref{e:apx1}, \eqref{e:ab2}, \eqref{e:apx4}, \eqref{fid}  yield \be\label{e:bde} \|u'\|_{L^{\min(p_+',2)}(0,T;((H^r(\Omega))^*)^N)}\leq C.\ee
All the constants $C$ here are $\delta$-independent. 

5. As a result of the previous steps, we can find a sequence of pairs  $(u_m,Z_m)$ satisfying \eqref{e:apx1},\eqref{e:apx3},\eqref{e:apx2.1} with $\delta=\delta_m\to 0$.  Due to the $\delta$-independent estimates \eqref{e:apx4}, \eqref{zbound}, \eqref{e:bde}, without loss of generality we may assume that \begin{gather}\label{sh1} u_m\to u\ \mathrm{weakly*}\ \mathrm{in} \ L^{\infty}(0,T;(L^2(\Omega))^N) \ \mathrm{and}\ \mathrm{weakly}\ \mathrm{in}\ L^2(Q)^N,\\ u_m^\prime\to u^\prime \ \mathrm{weakly}\ \mathrm{in} \ {L^{{\min(p_+',2)}}(0,T;((H^r(\Omega))^*)^N)}, \\ Z_m\to Z\ \mathrm{weakly}\ \mathrm{in} \ L^{p_+'}(Q;\M).\end{gather} 
By the Aubin-Lions-Simon lemma \cite{Si87} and bounds \eqref{l1bound}, \eqref{e:bde}, \be u_m\to u\ \mathrm{strongly}\ \mathrm{in} \ L^{1}(Q)^N,\ee and thus without loss of generality \be \label{sh2} u_m\to u\ \mathrm{a.e.}\ \mathrm{in} \ Q.\ee 

As above, we can pass to the limit in \eqref{e:apx1},\eqref{e:apx3} and in the linear and constant members of \eqref{e:apx2.1}. Note that the second member in \eqref{e:apx1} goes to zero by virtue of \eqref{e:ab2}, and the last member of \eqref{e:apx2.1} also goes to zero. For a.a. $t_*\in(0,T)$, we can pass to the limit inferior, keeping the sign of the inequality, in the first member of \eqref{e:apx2.1} (we test \eqref{e:apx2.1} by smooth compactly supported nonnegative scalar functions of time, and pass to the limit in the sense of scalar distributions, see \cite{Li96, Vo11,SVU15} for technical details in similar situations). By Theorem \ref{L:twinp}, $u \in BVPV^{p(\cdot)}$ and we can  pass to the limit inferior in the $VPV^{p(\cdot)}$-term. 

It remains to pass to the limit in the penultimate term of \eqref{e:apx2.1} involving the fidelity. By \cite[Theorem 2.6]{mt5}, \eqref{sh1} and \eqref{sh2} yield \be \label{sh3} f(\cdot,u_m)\to f(\cdot,u)\ \mathrm{weakly}\ \mathrm{in} \ L^{2}(Q)^N.\ee Consider the functional $$F: L^2(Q)^N\to\R, \ F(u)=\int_{Q_{t_*}}  f(t,x,u(t,x))\cdot u(t,x)\,dx\,dt.$$ A classical result on Nemytskii operators \cite{Kr64,mt5} implies that $F$ is bounded and continuous. Since the function $v\mapsto v\cdot f(t,x,v)$ is concave, $F$ is concave and therefore weakly upper semicontinuous. This fact together with \eqref{sh1} and \eqref{sh3} gives the opportunity to pass to the limit superior in the penultimate term of \eqref{e:apx2.1}, maintaining the sign of the inequality.

Hence, in the limit we get \begin{multline} \label{e:apx10} 
\|u(t_*)-w(t_*)\|^2 +2\int_{Q_{t_*}} \partial_t w(t,x) \cdot u(t,x)\,dx\,dt+2\, VPV^{p(\cdot)}(u)({Q_{t_*} \cap U})\\ \leq \|u_0-w(0)\|^2 +\|
w(t_*)\|^2-\|w(0)\|^2+2
\int_{Q_{t_*} \cap U} Z(t,x)\cdot\nabla w(t,x) \,dx\,dt\\
+2\int_{{Q_{t_*}}\backslash U}|\nabla w(t,x)|^{p(t,x)-2}\nabla w(t,x) \cdot (\nabla w(t,x)-\nabla u(t,x))\,dx\,dt\\ +2\int_{Q_{t_*}} f(t,x,u(t,x))\cdot(u(t,x)-w(t,x))\,dx\,dt
\end{multline} for a.a. $t_*\in(0,T)$. By a Lions-Magenes lemma \cite[Lemma 2.2.6]{ZV08},  $$u\in C_w(0,T; L^2(\Omega)^N).$$ Therefore, the first term of \eqref{e:apx10} is lower semicontinuous in time. Since $VPV^{p(\cdot)}(u)$ is a Radon measure on $Q$, the third term is also lower semicontinuous in time.  The remaining members are continuous in time. Hence, \eqref{e:apx10} holds for all $t_*\in[0,T]$, in particular, for $t_*=T$. Thus, $(u,Z)$ satisfies \eqref{E:zp}, \eqref{E:wvep}, \eqref{E:ic}. By density, the test functions $v$ and $w$ in \eqref{E:zp} and \eqref{E:wvep} can be taken from the spaces indicated in Definition \ref{D:wwdefpx}. 
\end{proof}
\subsection{Semigroup solutions} \label{s:ssol} In this section we assume that $p$ does not depend on time, i.e. we have a $Y$-semicontinuous variable exponent $p :\Omega \to [1,p_+]$, and $Y=[p(x)=1]$ is the critical set in $\Omega$. To fix the ideas and avoid unnecessary techicalities, we also assume that $f\equiv 0$. We are thus left with the following Neumann problem: \be \label{E:mainpl0} \left\{
\begin{array}{ll}
\displaystyle \partial_t u(t,x)=\divo Z(t,x),\\
Z(t,x)=|\nabla u(t,x)|^{p(x)-2}\nabla u(t,x),\\ Z(t,x)\nu(x) =0,\ x\in \partial \Omega,
\\ u(0,x)=u_0(x). \end{array}
\right. \ee 
Consider the functional $\Psi:L^2(\Omega)^N\to \R$, 
\be \label{E:psi} \Psi(u)=\left\{
\begin{array}{ll}
TV(u)(Y) + \int_{\Omega\setminus Y} \frac {\abs{\nabla u}^{p(x)}}{p(x)}\,dx, & u\in (BV^{p(\cdot)}(\Omega))^N\cap L^2(\Omega)^N,\\ +\infty & \mathrm{otherwise}
. \end{array} \right.  \ee

Getting rid of the auxiliary variable $Z$, it is easy to observe that \eqref{E:mainpl0} is a formal gradient flow of $\Psi$ in $L^2(\Omega)^N$: $$u^\prime+\mathrm{grad}_{L^2(\Omega)^N}\, \Psi(u)=0.$$ However, the functional $\Psi$ is not regular enough to employ this fact immediately. But we still have   
\begin{lemma} \label{l:lsc1} The functional $\Psi$ is proper, convex, lower-semicontinuous and has a dense domain.
\end{lemma}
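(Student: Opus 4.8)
The plan is to verify the four properties separately, with the bulk of the work going into lower-semicontinuity.

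\textbf{Properness.} The functional $\Psi$ is not identically $+\infty$: for instance any constant vector function $u$ lies in $(BV^{p(\cdot)}(\Omega))^N\cap L^2(\Omega)^N$ and has $\Psi(u)=0$. Moreover $\Psi\geq 0$ everywhere, so $\Psi$ never takes the value $-\infty$. Hence $\Psi$ is proper.

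\textbf{Convexity.} On the effective domain, $\Psi$ splits as the sum of $u\mapsto TV(u)(Y)$ and $u\mapsto \int_{\Omega\setminus Y}\frac{|\nabla u|^{p(x)}}{p(x)}\,dx$, and it is enough to check each is convex (and that the domain is convex, which is clear since $(BV^{p(\cdot)})^N\cap L^2$ is a linear space). The first term is convex because $u\mapsto TV(u)$ is a supremum of linear functionals (it is defined as such a sup in the Preliminaries), and restriction of the total variation measure to the Borel set $Y$, i.e. $TV(u)(Y)$, is likewise realized as a supremum of linear functionals of $u$ via formula \eqref{E:tvf3} applied on neighbourhoods of $Y$ and a limiting argument (the same one used in the definition of $VPV(v)(U)$); alternatively one invokes that $TV(\cdot)(Y)=\sup$ over Borel partitions of $Y$ of $\sum_k|\nabla u(E_k)|$, a sup of seminorms of the vector measure $\nabla u$, hence convex in $u$. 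The second term is convex because $z\mapsto \frac{|z|^{p(x)}}{p(x)}$ is convex on $\M$ for each fixed $x$ (it is the composition of the convex increasing function $t\mapsto t^{p(x)}/p(x)$ on $[0,\infty)$ with the norm $|\cdot|$, using $p(x)\geq 1$), and $u\mapsto\nabla u$ is linear, so the integrand is a convex function of $u$ pointwise and integration preserves convexity.

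\textbf{Lower-semicontinuity.} Suppose $u_m\to u$ in $L^2(\Omega)^N$ with $\liminf_m\Psi(u_m)<+\infty$; passing to a subsequence we may assume the $\liminf$ is a finite limit and $\sup_m\Psi(u_m)<\infty$. Using \eqref{e:p-p}–\eqref{e:wp-p} this bounds $u_m$ in $(BV^{p(\cdot)}(\Omega))^N$, and in particular $\nabla u_m$ is bounded in total variation, so $\nabla u_m\rightharpoonup\nabla u$ weakly-$*$ as vector Radon measures while $u\in BV(\Omega)^N$. This is precisely the situation of the static analogue of Theorem \ref{L:twinp} (the time-independent $BV^{p(\cdot)}$ version), whose proof transfers verbatim: introduce the $\epsilon$-neighbourhoods $Y_\epsilon$ of $Y$ in $\Omega$ with $|\partial Y_\epsilon|=0$ and $\inf_{\Omega\setminus Y_\epsilon}p>1$, use reflexivity of $W^{1,p(\cdot)}(\Omega\setminus Y_\epsilon)^N$ (Proposition-type statement, $p_->1$ off $Y_\epsilon$) to get weak convergence of the gradients there and hence $\int_{\Omega\setminus Y_\epsilon}|\nabla u|^{p(x)}\,dx\leq\liminf_m\int_{\Omega\setminus Y_\epsilon}|\nabla u_m|^{p(x)}\,dx$ by the modular lower-semicontinuity of \cite[Theorem 2.2.8]{Diening11}, combine with weak-$*$ lower-semicontinuity of $TV(\cdot)$ on neighbourhoods of $Y$, and send $\epsilon\to0$ by monotone convergence, discarding the vanishing error terms $|Y_\epsilon\setminus Y|$ and $\int_{Y_\epsilon\setminus Y}|\nabla u|^{p(x)}\,dx$. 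This yields $u\in BV^{p(\cdot)}(\Omega)^N$ and $TV(u)(Y)+\int_{\Omega\setminus Y}|\nabla u|^{p(x)}\,dx\leq\liminf_m(\cdots)$; dividing the integral term's integrand by $p(x)\in[1,p_+]$ only rescales by factors in $[1/p_+,1]$ and does not affect the inequality after the standard term-by-term argument, so $\Psi(u)\leq\liminf_m\Psi(u_m)$. Since $u_m\to u$ in $L^2$ also keeps $u\in L^2(\Omega)^N$, we are done.

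\textbf{Dense domain.} The domain contains $(C^\infty(\overline\Omega))^N\cap L^2$, which already is dense in $L^2(\Omega)^N$: for a smooth $u$ one has $TV(u)(Y)\leq\int_Y|\nabla u|\,dx<\infty$ and $\int_{\Omega\setminus Y}|\nabla u|^{p(x)}/p(x)\,dx\leq\int_\Omega(1+|\nabla u|^{p_+})/1\,dx<\infty$, so $\Psi(u)<\infty$; and $C^\infty(\overline\Omega)^N$ is dense in $L^2(\Omega)^N$. Hence $\mathrm{dom}\,\Psi$ is dense.

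The main obstacle is the lower-semicontinuity step, specifically handling the critical set $Y$ where the exponent degenerates to $1$: one cannot use reflexivity or uniform convexity of $W^{1,p(\cdot)}$ near $Y$, and must instead exploit the $Y$-semicontinuity of $p$ through the exhaustion by the sets $Y_\epsilon$ exactly as in the proof of Theorem \ref{L:twinp}. Everything else is routine.
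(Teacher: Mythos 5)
Your proposal is correct and follows essentially the same route as the paper, which simply notes that lower-semicontinuity "can be checked in a way similar to Theorem \ref{L:twinp}" and that the remaining properties are obvious; your write-up fills in exactly those details (the $Y_\epsilon$ exhaustion, reflexivity off $Y_\epsilon$, modular lower-semicontinuity, and weak-$*$ lower-semicontinuity of $TV$ near $Y$). The only point worth tightening is the handling of the weight $1/p(x)$: rather than a term-by-term rescaling, just apply the weak lower-semicontinuity argument directly to the convex integrand $z\mapsto |z|^{p(x)}/p(x)$, which works verbatim.
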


\begin{proof} The lower-semicontinuity can be checked in a way similar to Theorem \ref{L:twinp} (see also \cite{HHLT13}). The rest is obvious. \end{proof}

Lemma \ref{l:lsc1} implies that the subdifferential $\partial \Psi$ has a dense domain $\mathrm{dom}\, \partial\Psi$, see \cite[Subsection 17.2.2]{ABM14}.  For convex lower semicontinuous functionals, the notion of subdifferential is the correct extension of the notion of gradient, see again \cite[Subsection 17.2.2]{ABM14}. We can thus give the following definition, which is also motivated by \cite{va04, ABCM01} and by \cite{AM11,AM13}. \begin{definition} A function $$W^{1,\infty}_{loc}((0,+\infty); L^2(\Omega)^N)\cap C([0,+\infty); L^2(\Omega)^N)$$ is called a semigroup solution to \eqref{E:mainpl0} provided $$u(t)\in \mathrm{dom}\, \partial\Psi$$ for all $t\geq 0$, \be 0\in u^\prime+\partial\Psi(u).\ee for a.a. $t> 0$, and \be u(0)=u_0.\ee \end{definition}

By \cite[Theorem 17.2.3]{ABM14}, we shortly have

\begin{theorem} For any $u_0\in L^2(\Omega)^N$, there is a unique semigroup solution to \eqref{E:mainpl0}.\end{theorem}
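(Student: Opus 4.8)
The plan is to invoke the classical theory of maximal monotone operators / gradient flows for convex lower semicontinuous functionals on a Hilbert space, applied to $H=L^2(\Omega)^N$ and the functional $\Psi$ from \eqref{E:psi}. By Lemma \ref{l:lsc1}, $\Psi$ is proper, convex, lower semicontinuous with dense domain, so its subdifferential $\partial\Psi$ is a maximal monotone operator on $H$ whose domain is dense (this is precisely the setting of \cite[Theorem 17.2.3]{ABM14}, or equivalently the Brezis--Komura theorem). First I would quote this abstract result verbatim: for any $u_0$ in the closure of $\mathrm{dom}\,\partial\Psi$ — which, by density, is all of $L^2(\Omega)^N$ — there exists a unique function $u\in C([0,+\infty);L^2(\Omega)^N)$ which is locally Lipschitz on $(0,+\infty)$ (hence in $W^{1,\infty}_{loc}((0,+\infty);L^2(\Omega)^N)$), satisfies $u(t)\in\mathrm{dom}\,\partial\Psi$ for every $t>0$, is right-differentiable everywhere on $(0,+\infty)$ with $\tfrac{d^+u}{dt}(t)=-\partial^0\Psi(u(t))$ (the minimal section), and in particular $0\in u'(t)+\partial\Psi(u(t))$ for a.a.\ $t>0$, together with $u(0)=u_0$. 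This is exactly the definition of a semigroup solution, so existence follows.

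For uniqueness I would use monotonicity directly rather than citing it as part of the package: if $u_1,u_2$ are two semigroup solutions with the same datum, then for a.a.\ $t$ one has $-u_i'(t)\in\partial\Psi(u_i(t))$, and subtracting and pairing with $u_1(t)-u_2(t)$ in $H$ gives
\[
\tfrac12\tfrac{d}{dt}\|u_1(t)-u_2(t)\|^2 = -\big(u_1'(t)-u_2'(t),\,u_1(t)-u_2(t)\big)\le 0
\]
by monotonicity of $\partial\Psi$, where the differentiation of $t\mapsto\|u_1(t)-u_2(t)\|^2$ is justified because both $u_i$ are locally Lipschitz into $H$. Since $u_1(0)=u_2(0)$, Gr\"onwall (trivially) yields $u_1\equiv u_2$. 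This also gives the contraction-semigroup property, which is why the word ``semigroup'' is appropriate, though the statement only asks for existence and uniqueness.

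The only genuine point requiring care — and the step I would flag as the main obstacle — is verifying that the hypotheses of the abstract theorem are literally met, namely that $\Psi$ is \emph{not identically} $+\infty$ (properness) and that $\overline{\mathrm{dom}\,\partial\Psi}=L^2(\Omega)^N$. Properness is clear since constants lie in $(BV^{p(\cdot)}(\Omega))^N\cap L^2(\Omega)^N$ with $\Psi=0$. Density of $\mathrm{dom}\,\Psi$ (hence of $\mathrm{dom}\,\partial\Psi$, by the standard fact that $\mathrm{dom}\,\partial\Psi$ is dense in $\mathrm{dom}\,\Psi$ for a proper convex l.s.c.\ functional) follows because $C^\infty(\overline\Omega)^N\subset\mathrm{dom}\,\Psi$ and $C^\infty(\overline\Omega)^N$ is dense in $L^2(\Omega)^N$. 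All of this is routine and is subsumed in Lemma \ref{l:lsc1} together with the cited \cite[Subsection 17.2.2]{ABM14}; consequently the proof reduces to the one-line invocation ``apply \cite[Theorem 17.2.3]{ABM14}'', exactly as the statement ``we shortly have'' suggests. I would write it in two or three sentences, emphasizing that the real content is the functional-analytic setup of Section \ref{s:ssol} (in particular the lower semicontinuity proven in the manner of Theorem \ref{L:twinp}) and that the evolution statement itself is then immediate from the general theory.
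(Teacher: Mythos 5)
Your proposal is correct and follows the paper's own route exactly: the paper proves this theorem by a one-line invocation of \cite[Theorem 17.2.3]{ABM14} (the Brezis--Komura gradient-flow theory), with all the actual work residing in Lemma \ref{l:lsc1}. Your added details (properness via constants, density via smooth functions, uniqueness via monotonicity of $\partial\Psi$) are the standard content subsumed in that citation, so nothing is missing and nothing diverges from the paper.
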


\begin{remark} Semigroup solutions automatically have plenty of nice properties provided by the gradient flow structure, such as continuous dependence on $u_0$ and long-time convergence to the infimum of $\Psi$, we refer to \cite[Section 17]{ABM14} for the details.  \end{remark}

\begin{remark1}\label{r:ssol} Let $(u,Z)$ be a sufficiently regular weak solution to \eqref{E:mainpl0} in the sense of Definition \ref{D:wwdefpx}. We claim that $u$ is a (unique) semigroup solution. Indeed, fix an element $h\in (BV^{p(\cdot)}(\Omega))^N\cap L^2(\Omega)^N$. For any $s\geq 1$, let us introduce the function $$\psi_{s}: \M\to \R,\, \psi_{s}(A)= \frac{|A|^{s}}{s}.$$ Then we can rewrite \eqref{E:wve5psd} and \eqref{E:wve6p} in the subdifferential form \be  \label{E:wve7psd}  Z(t,x)\in \partial \psi_{p(x)}(\nabla u(t,x))\ee a.e. in $Q$. In particular, for a.a. $x\in \Omega\backslash Y$ we have 
\be  \label{E:wve8psd}  \psi_{p(x)}(\nabla (u+h)(t,x))\geq \psi_{p(x)}(\nabla u (t,x)) +\nabla h(x) \cdot Z(t,x), \ee 
whence \be  \label{E:wve9psd} \int_{\Omega\setminus Y} \frac {\abs{\nabla (u+h)}^{p(x)}}{p(x)}\,dx\geq \int_{\Omega\setminus Y} \frac {\abs{\nabla u}^{p(x)}}{p(x)}\,dx + \int_{\Omega\setminus Y} \nabla h(x) \cdot Z(t,x)\,dx\ee By \cite[Corollary C.7]{va04} we have \be
\label{E:wve10psd} TV(u+h)(Y)\geq \frac {\int_{Y}  Z(t,x)\,d\,\nabla (u+h)}{\|Z\|_{L^\infty(U;\M)}} \ee for any open set $U\subset \Omega$ containing $Y$. Since  $Z\cdot \nabla u=|\nabla u|$ a.e. in $Y$ and $\|Z\|_{L^\infty(Y;\M)}\leq 1$, provided $Z$ is continuous we infer \begin{multline}
\label{E:wve11psd} TV(u+h)(Y)\geq \int_{Y}  Z(t,x)\,d\,\nabla h(x)+\int_{Y} \nabla u(t,x)\cdot Z(t,x)\,dx\\=\int_{Y}  Z(t,x)\,d\,\nabla h(x)+\int_{Y} |\nabla u(t,x)|\,dx.\end{multline} Adding \eqref{E:wve9psd} and \eqref{E:wve11psd}, remembering that $\partial_t u(t,x)=\divo Z(t,x)$ and $Z\nu=0$ \ $\mathcal{H}^{n-1}$-a.e. in $\partial \Omega$, and using \cite[Theorem C.9]{va04}, we deduce \be
\label{E:wve12psd} \Psi(u+h)\geq \Psi(u)- \int_{\Omega}  \divo Z(t,x)\cdot h(x)\,dx=\Psi(u)- \int_{\Omega}  \partial_t u(t,x)\cdot h(x)\,dx.\ee Trivially, \eqref{E:wve12psd} also holds for $h\in L^2(\Omega)^N\setminus(BV^{p(\cdot)}(\Omega))^N$. Hence, \be -u^\prime\in \partial\Psi(u),\ee and $u$ is a semigroup solution.  \end{remark1}


\subsection*{Acknowledgement} DV thanks Goro Akagi, 
Stanislav Antontsev and Peter H\"{a}st\"{o} for useful correspondence. 

\bibliographystyle{plain}      
\bibliography{vtv}   
\end{document}